

\documentclass[english,11pt]{article}
\usepackage{geometry}
\usepackage{float}
\usepackage{pgf,tikz}
\usepackage{mathrsfs}
\usetikzlibrary{arrows}
\usepackage{fullpage}
\usepackage{bm}
\usepackage{comment}
\usepackage{fancyhdr}
\usepackage{caption}
\usepackage{epsfig}
\usepackage{amsthm}
\usepackage{amsfonts}
\usepackage{enumerate}
\usepackage{subfigure}
\usepackage{setspace}
\usepackage{showlabels}
\usepackage{mathtools}
\usepackage{upgreek}
\usepackage[linesnumbered,ruled,vlined]{algorithm2e}
\usepackage{tikz-3dplot} 
\usepackage{pgfplots}
\usepackage{amsmath,amssymb}
\usepackage{color}
\usepackage{dcolumn}
\usepackage{bm}
\usepackage{pgfplots}
\pgfplotsset{compat=1.11}
\captionsetup[table]{position=bottom} 
\usepackage{etoolbox}
\newcounter{dummy}
\usepackage{enumitem} 
\makeatletter  
\newcommand\myitem[1][]{\item[#1]\refstepcounter{dummy}\def\@currentlabel{#1}}   
\makeatother 

\usepackage{lineno}

\usetikzlibrary{matrix}

\newtheorem{theorem}{Theorem}[section]

\newtheorem{lemma}[theorem]{Lemma}
\newtheorem{proposition}[theorem]{Proposition}
\newtheorem*{remark}{Remark}


\newcommand{\lambert}{L}        
\newcommand{\R}{\mathbb{R}}     
\newcommand{\HH}{\mathbb{H}} 
\newcommand{\PT}{\mathrm{PT}}   
\DeclareMathOperator{\vol}{vol} 
\newcommand{\dvol}{\mathrm{dvol}} 
\newcommand{\mmid}{\hspace{1pt}|\hspace{1pt}}


\theoremstyle{definition}
\newtheorem{definition}[theorem]{Definition}


\numberwithin{equation}{section}

\usepackage{tikz-cd,mathtools}
\usepackage{pgfplots}
\pgfplotsset{width=7cm,compat=1.8}

\usepackage{mleftright}
\usepackage{xparse}

\usepackage{amsmath}

\usepackage[utf8]{inputenc}

\usepackage[final]{hyperref} 
\usepackage{setspace}

\usepackage{txfonts}

\DeclareUnicodeCharacter{0301}{\'{e}}

\usepackage[]{biblatex}
 
\addbibresource{gaussian_distribution_in_a_manifold.bib}
\hypersetup{
	colorlinks=true,       
	linkcolor=blue,        
	citecolor=blue,        
	filecolor=magenta,     
	urlcolor=blue         
}
\usepackage{breqn}

\SetKwInput{KwInput}{Input}
\SetKwInput{KwOutput}{Output}
\SetKwInput{KwRequire}{Require}

\NewDocumentCommand{\evalat}{sO{\big}mm}{%
  \IfBooleanTF{#1}
   {\mleft. #3 \mright|_{#4}}
   {#3#2|_{#4}}%
}

\renewcommand{\openbox}{\leavevmode
  \hbox to.77778em{%
  \hfil\vrule
  \vbox to.675em{\hrule width.6em\vfil\hrule}%
  \vrule\hfil}}

\title{Wrapped Distributions on homogeneous Riemannian manifolds}
\author{Fernando Galaz-Garc\'ia \thanks{Department of Mathematical Sciences, Durham University, UK, fernando.galaz-garcia@durham.ac.uk}, Marios Papamichalis \thanks{Fox School of Business, Temple University, USA,tun47241@temple.edu},Kathryn Turnbull \thanks{Fox School of Business, Temple University, USA, tun30719@temple.edu} \\ Sim\'on Lunag\'omez \thanks{Departamento de Estadistica, Instituto Tecnologico Autonomo de Mexico}, Edoardo Airoldi \thanks{Fox School of Business, Temple University, USA , airoldi@temple.edu} }

\begin{document}

\maketitle

\begin{abstract}
We provide a general  framework for constructing probability distributions on Riemannian manifolds, taking advantage of area-preserving maps and isometries. Control over distributions' properties, such as parameters, symmetry and modality yield a family of flexible distributions that are straightforward to sample from, suitable for use within Monte Carlo algorithms and latent variable models, such as autoencoders. As an illustration, we empirically validate our approach by utilizing our proposed distributions within a variational autoencoder and a latent space network model. Finally, we take advantage of the generalized description of this framework to posit questions for future work.
\end{abstract}

{\bf Key words:} Latent Space Network Models, Smooth Manifold, Riemannian Manifold, Variational autoencoders, Isometries

\footnotetext{
Dr. Fernando Galaz-Garc\'ia is the main author of the current paper. All theoretical/mathematical proofs belong to him. Dr. Marios Papamichalis and Dr. Kathryn Turnbull worked their initial ideas, that came as a consequence of their first paper \cite{Papamichalis2021}, with him. They both worked under the supervision and guidance of Prof. Sim\'on Lunag\'omez and Prof. Edoardo Airoldi for the statistical and machine learning part of the paper. This is the first version of the current paper. Code, corrections and more figures will be available after submission.}




\newpage

\setcounter{tocdepth}{1}

\section{Introduction}

Probability distributions play a fundamental role in statistical data analysis where, for continuous data, the dominant assumption is to consider random variables in Euclidean space. However, the Euclidean assumption is not appropriate for some data types and this motivates the development and study of distributions in non-Euclidean spaces. Notable examples include directional statistics (see \cite{mardia1999_chapter9}), in which observations typically lie on a sphere, and data that are expressed as tensors, such as covariance matrices (\cite{pennec2006riemannian, said2017gaussian, said2017riemannian}) and data structures which arise in image and signal processing applications (see \cite{arnaudon2013riemannian, barachant2011multiclass}). Furthermore, in recent years, latent variable models have been shown to offer superior performance when the parameters are modelled in non-Euclidean spaces. Variational autoencoders (see \cite{kingma2019, nagano2019wrapped, skopek2019mixed, mathieu2019,ovinnikov2019}) and latent space network models (see \cite{hoff2002latent, lubold2020identifying, smith2019geometry, mccormick2015,Papamichalis2021}) offer two pertinent examples where it is most typical for non-Euclidean latent variables to be modelled in spherical or hyperbolic space. In this context, these choices of underlying geometry have been shown to exhibit desirable properties including tree structures (see \cite{nickel2017poincare}) and networks with particular interaction patterns, such as power-law degree distributions (see \cite{krioukov2010}).\\

In this article we consider distributions on homogeneous Riemannian manifolds, namely 
Riemannian manifolds equipped with a transitive isometric action of a Lie group (see, for example, \cite{lee2006}), 
with a particular focus on spherical and hyperbolic geometries. There exists an increasingly rich literature on this topic which can broadly be divided into intrinsic and extrinsic methodology where, in the latter case, a manifold is viewed as embedded within a larger Euclidean ambient space. For spherical data, a plethora of, primarily extrinsic, Gaussian distributions have been proposed from directional statistics (see \cite{mardia1999_chapter9, hauberg2018directional}) which includes the well-known von Mises-Fisher (see \cite{vonmises1918, fisher1953dispersion}) and Kent (see \cite{kent1982}) distributions. For hyperbolic geometry, both intrinsic and extrinsic distributions have been proposed and these include the maximum entropy Normal (see \cite{said2014new,pennec2006intrinsic}) and wrapped distributions in which a Euclidean distribution is viewed as part of the tangent space and projected onto the manifold (see \cite{grattarola2019adversarial}, \cite{nagano2019wrapped}). More generally, \cite{pennec2006intrinsic} provide a general definition of probability measures on Riemannian manifolds and a summary of distributions in hyperbolic space is given in the appendix of \cite{mathieu2019}.\\

\textcolor{red}{[K:Are we missing anything? F: Maybe also paper by Battacharya and Patrangenaru? It is more theoretical but it may be worth mentioning it.]} \\

\textcolor{red}{Add clear description of the novelty of our contributions.} \\
\end{comment}

In contrast with \cite{nagano2019wrapped,mathieu2019,skopek2019mixed}, the family presented in this paper does not distort the distribution and associates the mean and variance parameters of the distribution in the manifold directly with initial mean and variance of the distribution in the tangent plane. More specifically, the Wrapped normal Gaussian Distribution with Lambert mapping preserves the measure, which means that the properties, i.e. symmetry, unimodality, expectations and variances are preserved as well, making the statistical analysis easy to handle by offering better control over the parameters. On the other hand, exponential mapping which is used as a diffeomorphism in \cite{nagano2019wrapped,mathieu2019,skopek2019mixed}, preserves the distance but distorts the distribution, leading to a more complex and not in terms of parameters not insightful formula. In short, the practitioner is provided with a distribution which is equipped with explicit, simpler, fast and easy to calculate formulas. Moreover, as mentioned above the advantages of this parametric, non-distorted distribution is that it gives you real information regarding the population which can be controlled by its parameters. For example, in terms of variance, this parametric distribution can perform quite well when they have spread over when data happens to be different. Another benefit of this type of parametric distributions, again due to the association with the normal includes statistical power which means that it has more power than other tests. Therefore, you will be able to find an effect that is significant when one will exist truly.\\

By building on the construction of \cite{nagano2019wrapped}, we consider a procedure for generating wrapped probability distributions in homogeneous Riemannian manifolds which includes the sphere and hyperbolic geometry as special cases. This construction is computationally convenient and straightforward to sample from, making distributions derived in this manner particular convenient for practitioners to be used in modelling and as part of Monte Carlo sampling algorithms. Using properties of homogeneous spaces, we simplify the procedure outlined in \cite{nagano2019wrapped} for hyperbolic geometry and further describe an equivalent construction in spherical geometry similar to that presented in \cite{skopek2019mixed}. Furthermore, we present a generalised description of this procedure and show how this can be used to gain insight on properties of these distributions and highlight directions for future work. We also consider alternative choices for mapping between the tangent plane and the manifold, and demonstrate the applicability of these distributions as part of variational autoencoders and latent space network models.\\

Our article is organized as follows. In Section \ref{sec:background} we provide 
rigorous definitions of Riemannian geometry tools that we will use in subsequent sections. 
In Section \ref{sec:wrapped_prob_distns} we focus on the construction of a distribution on a smooth manifold surface. In Section \ref{sec:theory}, we provide theoretical results which enable the statistical learning of
the probabilistic distribution on Riemannian manifolds that
could never have been considered before and thorough analytical results that concern the properties of the distribution. Experimental studies and results that focuses on the applicability of the previous section like latent space models on networks and variational autoencoders in which those distributions could be practical are presented in Section \ref{sec:experiments}. Then, in Section \ref{sec:discussion} we conclude the paper.


    \section{Background: Riemannian Geometry} 
\label{sec:background}

Riemannian geometry provides a toolkit for studying the geometry of \emph{smooth manifolds}. Intuitively, these are spaces which locally look like $n$-dimensional Euclidean space $\mathbb{R}^n$ in such a way that we can do differential calculus on them. Each point $p$ in a smooth manifold $M$ carries a copy of $\mathbb{R}^n$, the \emph{tangent space} to $M$ at $p$, denoted by $T_pM$. A \emph{Riemannian metric} $g = \{g_p\}_{p\in M}$ on $M$ is a collection of inner products $g_p$ on each tangent space $T_pM$ that varies differentiably with $p$. The pair $(M,g)$ is a \emph{Riemannian manifold}. The Riemannian metric allows one to measure angles between tangent vectors, as well as lengths of piecewise differentiable curves in $M$. This, in turn, induces a distance function $d$ on $M$, making the pair $(M,d)$ into a metric space. Further fundamental invariants involve different notions of curvature, such as \emph{sectional}, \emph{Ricci}, and \emph{scalar} curvature, of which sectional curvature is the most geometric one and generalises the Gaussian curvature for surfaces. In this article, when we refer to \textit{curvature}, we will always mean sectional curvature. Here, we briefly review the basic concepts from Riemannian geometry that we will use in subsequent sections. For further details we refer the reader to the many available textbooks on Riemannian geometry, such as  \cite{doCarmo1992,lee2006,petersen2016,sakai1998}. We will use  \cite{doCarmo1992}, \cite{lee2006}, and \cite{sakai1998} as our main references.

\subsection{Smooth manifolds}




Recall that a topological space $X$ is \emph{Hausdorff} if, for every pair of points $p,q \in X$, there exist disjoint open subsets $U,V\subset X$ such that $p\in U$ and $q\in V$. A subset $N\subset X$ is a \textit{neighbourhood} of a point $p$ in $X$ if there is an open set $U\subset N$ with $p\in U$. 
We say that $X$ is \textit{second countable} if its topology has a countable basis. A \textit{map} is a continuous function.


\begin{definition}[$n$-dimensional differentiable manifold]
An \textit{$n$-dimensional differentiable}  \textit{manifold} is a second countable Hausdorff topological space $M$ satisfying the following conditions:
\begin{enumerate}
    \myitem[(C1)] \label{item:c1} For each point $p\in M$, there exists an open set $U$ containing $p$, an open set $\tilde{U}\subset \R^n$, and a homeomorphism $\varphi\colon U\subset M\to \tilde{U}\subset \R^n$ (i.e. a continuous bijective map with continuous inverse). The pair $(U,\varphi)$ is called a \textit{coordinate chart}.
    \myitem[(C2)] \label{item:c2} For any two coordinate charts $(U,\varphi)$, $(V,\psi)$ with $U\cap V \neq \varnothing$, the \textit{change of coordinates map} $\psi\circ\varphi^{-1}\colon \varphi(U\cap V)\to\psi(U\cap V)$ is a $C^\infty$   map between open subsets of $\R^n$.
    \myitem[(C3)] \label{item:c3} The family $\{(U,\varphi)\}$ is maximal with respect to conditions \ref{item:c1} and \ref{item:c2}, i.e., if $\{V,\psi\}$ is a family of charts satisfying conditions C1 and C2, then this family it must be contained in $\{(U,\varphi)\}$. 
    
\end{enumerate}
\end{definition}

Condition \ref{item:c1} formalises the notion that a manifold is locally Euclidean, i.e.\ that within some neighbourhood, an $n$-dimensional manifold ``looks like'' $\R^n$.
Note that, given a coordinate chart $(U,\varphi)$ of a differentiable manifold $M$, we may introduce (Euclidean) local coordinates on $U$ by considering  $\varphi(p) = (x_1(p), x_2(p), \dots, x_n(p))\in \R^n$ for any $p\in U\subset M$. Condition \ref{item:c2} allows us to formalise differentiable objects on $M$. 

\begin{definition}[Differentiable function, diffeomorphism]
A map $f\colon M\to N$ between smooth manifolds of dimension $m$ and $n$, respectively, is  \textit{differentiable} at $p\in M$ if, given a chart $(V,\psi)$ of $N$ at $f(p)$, there exists a chart $(U,\varphi)$ of $M$ at $p$ such that $f(U)\subset V$ and the map $\psi\circ f\circ\varphi^{-1}\colon \tilde{U}\subset \R^m\to \R^n$ is $C^\infty$  at $\varphi(p)$.  The map $f\colon M\to N$ is \textit{differentiable} if it is differentiable at all $p\in M$. A \textit{diffeomorphism} between differentiable manifolds is a differentiable bijective map with differentiable inverse. 
\end{definition}

From now on we will assume that all our manifolds are connected, i.e. they cannot be divided into disjoint nonempty open sets.\\

With the definition of a differentiable manifold in hand, we now want to formalise a notion of distance on a given differentiable manifold. We will do this by means of a Riemannian metric. To define this object, we first need to define the tangent space to a manifold $M$ at a point $p \in M$. Informally, the tangent space to $M$ at a point $p\in M$ consists of all tangent directions of smooth curves in $M$ passing through $p$.


\begin{definition}[Tangent vector, tangent space]
  Let $M$ be a differentiable manifold and fix $\varepsilon>0$. A differentiable function $\gamma\colon (-\varepsilon, \varepsilon)\to M$ is a (differentible) \textit{curve} in $M$. Fix $p\in M$, suppose that $\gamma(0)=p$, and let $\mathcal{F}(p)$ be the set of real-valued functions on $M$ that are differentiable at $p$. The \textit{tangent vector to the curve $\gamma$} at $t=0$ is a function $\gamma'(0)\colon \mathcal{F}(p)\to \R$ given by
  \[
  \gamma'(0)(f) = \evalat[\Bigg]{\frac{d(f\circ\gamma)}{dt}}{t=0}, \quad f\in \mathcal{F}(p).
  \]
  A \textit{tangent vector at} $p$ is the tangent vector at $t=0$ of some curve $\gamma\colon(-\varepsilon, \varepsilon)$ with $\gamma(0) = p$. The \textit{tangent space to $M$ at $p$}, denoted by $T_pM$, is the set of all tangent vectors to $M$ at $p$.
\end{definition}
If $M$ is an $n$-dimensional differentiable manifold, each tangent space $T_pM$, for $p\in M$, is a real vector space isomorphic to $\R^n$. Each differentiable function $f\colon M\to N$ between smooth manifolds induces, for each point $p\in M$, a linear mapping $df_p\colon T_pM \to T_{f(p)}N$, called the \textit{differential} of $f$ at $p$. Note that, if $f$ is a diffeomorphism, then its differential $df_p$ at any point $p\in M$ is a linear isomorphism between vector spaces.

A  \textit{vector field} $X$ on $M$ is a correspondence that associates to each point $p\in M$ a tangent vector $X(p)\in T_pM$. Given a coordinate chart $\varphi\colon U\subset M\to \tilde{U}\subset \R^n$ around $p\in M$, with $\varphi = (x_1,\ldots,x_n)$, we may write the value of $X$ at $p$ as 
\begin{align}
\label{eq:vector_field_coordinates}
X(p) = \sum_{i=1}^n a^i(p)\frac{\partial}{\partial x_i}(p),
\end{align}
where each $a_i\colon U\subset M\to \R$ is a real-valued function and $\frac{\partial}{\partial x_i}(p)=d\varphi^{-1}_{\varphi(p)}e_i$ is the image of the $i$-th element of the canonical basis of $\R^n$ under the differential $d\varphi^{-1}_{\varphi(p)}\colon T_{\varphi(p)}\R^n\cong\R^n \to T_pM\cong\R^n$. Note that the tangent vectors $\left\{\frac{\partial}{\partial x_i}(p)\right\}$ form a basis of $T_pM$. The functions $a_i$ in \eqref{eq:vector_field_coordinates} are the \textit{component functions} of $X$ with respect to the chart $\varphi$. A vector field $X$ on $M$ is \textit{differentiable} if its component functions with respect to any  chart are differentiable functions. We will denote the set of differentiable vector fields on $M$ by $\mathcal{X}(M)$.
 A \textit{vector field $V$ along a differentiable curve $\gamma\colon (a,b)\to M$} is a differentiable function that associates to every $t\in (a,b)$ a tangent vector $V(t)\in T_pM$ such that, for any differentiable function $f\colon M\to \R$, the function $t\mapsto V(t)(f)$ is differentiable. The vector field $d\gamma \left(\frac{d}{dt}\right)$, which we will denote by $\dot{\gamma}$, is the \textit{tangent vector field} of $\gamma$. It is important to remark that a vector field along $\gamma$ may not necessarily extend to a vector field on an open subset of $M$.

\subsection{Riemannian manifolds and basic Riemannian objects}


\begin{definition}[Riemannian manifold]
A \textit{Riemannian metric} on a differentiable $n$-dimensional manifold $M$ is a correspondence which associates to each point $p\in M$ an inner product $\langle\cdot,\cdot \rangle_p$ (i.e.\ a symmetric, bilinear, positive-definite form) on the tangent space $T_pM$, which varies differentiably with respect to $p$ in the following sense: 
If $\varphi\colon U\subset M\to \tilde{U}\subset \R^n$ is a coordinate chart around $p$ with $\varphi = (x_1,\ldots,x_n)$, and $\frac{\partial}{\partial x_i}(q) = d\varphi^{-1}_qe_i$ for $q\in U$, then the real-valued functions 
\begin{align}
\label{eq:g_ij}
g_{ij}(x_1,\ldots,x_n) =
\left\langle \frac{\partial}{\partial x_i}(q),\frac{\partial}{\partial x_j}(q) \right\rangle_q 
\end{align}
are differentiable on $\tilde{U}$. 
A differentiable manifold with a given Riemannian metric is a \textit{Riemannian manifold}.  
\end{definition}

It is a basic fact that every differentiable manifold admits a Riemannian metric (see, for example, \cite[Chapter 2, Proposition 2.10]{doCarmo1992}).

\begin{definition}[Riemannian isometry]
A diffeomorphism $f\colon M\to N$ between Riemannian manifolds is a \textit{(Riemannian) isometry} if 
\begin{align*}
    \langle u,v\rangle_p = \langle df_pu,df_pv \rangle_{f(p)} \text{ for all } p\in M,\ u,v\in T_pM.
\end{align*}
\end{definition}

Two Riemannian manifolds are \emph{isometric} if there exists an isometry between them. From the point of view of Riemannian geometry, isometric Riemannian manifolds are equivalent objects.

A Riemannian metric allows us to measure angles and lengths of tangent vectors. This in turn makes it possible to measure lengths of piecewise differentiable curves joining two points by integrating the length of the tangent vectors to the curve along its domain. A fundamental observation  is that one may induce a metric space structure on a Riemannian manifold $M$ by defining the distance $d(p,q)$ between any two points as
\[
    d(p,q) = \inf \{\,\mathrm{Length}(\gamma) \mid \gamma \text{ is a piecewise differentiable curve joining $p$ and $q$}\,\}.
\]
Thus, the pair $(M,d)$ is a metric space and the topology induced on $M$ by the distance function $d$ coincides with the manifold topology. We say that a Riemannian manifold is \textit{complete} if it is complete as a metric space space, i.e.\ if every Cauchy sequence converges.

\begin{definition}[Metric isometry]
A diffeomorphism $f\colon M\to N$ between Riemannian manifolds is a \textit{(metric) isometry} if 
\begin{align*}
    d(p,q) = d(f(p),f(q)) \text{ for all } p,q\in M.
\end{align*}
\end{definition}

A diffeomorphism between Riemannian manifolds is a Riemannian isometry if and only if it is a metric isometry (see, for example, \cite{petersen2016}). Thus, we may simply speak of isometries of Riemannian manifolds. The set of isometries of a given Riemannian manifolds is a \textit{Lie group}, i.e. a group that is also a differentiable manifold for which the group operation and taking inverses are differentiable maps. Moreover, if the manifold is compact, its group of isometries is also compact. \\

Note that, in contrast to Euclidean space, there is no natural identification between tangent spaces at different points of a given differentiable manifold $M$. One is thus confronted with the problem of how to  differentiate vector fields. One may surmount this obstacle by considering a  \textit{linear connection} on $M$, i.e.\ a  map 
$\nabla\colon \mathcal{X}(M)\times \mathcal{X}(M) \to \mathcal{X}(M)$, denoted by $(X,Y)\mapsto \nabla_XY$,
that satisfies the following conditions:
\begin{align*}
    \nabla_{fX+gY}Z = f\nabla_XZ + g\nabla_YZ,\\
    \nabla_X(Y+Z)=\nabla_XY + \nabla_XZ,\\
    \nabla_X(fY) = f\nabla_XY+X(f)Y,
\end{align*}
for vector fields $X,Y,Z\in \mathcal{X}(M)$ and differentiable functions $f,g\colon M\to \R$.
 A choice of Riemannian metric $g$ on $M$ determines a unique linear connection 
$\nabla$, called the \textit{Levi--Civita connection} of the Riemannian manifold $(M,g)$. We will henceforth assume that a Riemannian manifold is always equipped with the Levi-Civita connection $\nabla$ and will call $\nabla_XY$ the \textit{covariant derivative of $Y$ by $X$}. The value $\nabla_XY(p)\in TpM$ only depends on $X(p)$ and the values of $Y$ on a differentiable curve in $M$ which is tangent to $X(p)$. Thus, for a differentiable vector field $Y$ along a curve $\gamma\colon (a,b)\to M$, we may consider the vector field $\nabla_{\dot{\gamma}}Y$, called the \textit{covariant derivative of $Y$ along $\gamma$}. Thus, by means of the Levi--Civita connection, we may canonically differentiate vector fields in a Riemannian manifold.
    The vector field $Y$  is \textit{parallel along $\gamma$} if $\nabla_{\dot{\gamma}}Y =0$ for all $t\in (a,b)$. For any $t_o\in (a,b)$ and any tangent vector $v_o\in T_{\gamma(t_o)}M$, there exists a unique parallel vector field $V$ along $\gamma$ with $V(t_o) = v_o$. We call $V$  the \textit{parallel transport} of $V_o$ along $\gamma$. In the euclidean case, parallel transport is given by parallel translating a vector along a curve.  With these definitions in hand, we may now define the \textit{parallel transport map}, which we will use in the next section.

\begin{definition}[Parallel transport map] Let $M$ be a Riemannian manifold, let $\gamma\colon (a,b) \to M$ be a differentiable curve, and fix $t_o,t_1\in (a,b)$.  The \textit{parallel transport map from $\gamma(t_o)$ to $\gamma(t_1)$} is the map 
\begin{align*}
\PT_{\gamma(t_0)\to \gamma(t_1)}\colon T_{\gamma(t_o)} M \rightarrow T_{\gamma(t_1)} M
\end{align*}
defined by letting $\PT_{\gamma(t_0)\to\gamma(t_1)}(v) = V(t_1)$,
where $V$ is the parallel transport of $v$ along $\gamma$.
\end{definition}

The parallel transport map is a linear isometry between the vector spaces $T_{\gamma(t_o)} M \rightarrow T_{\gamma(t_1)} M$, i.e.\ it is a linear isomorphism that preserves the inner products $\langle\cdot,\cdot \rangle_{\gamma(t_o)}$ and $\langle\cdot,\cdot \rangle_{\gamma(t_1)}$  induced on each tangent space by the Riemannian metric on $M$. In this way, given points $p,q\in M$, we may identify $T_p M$ and $T_q M$ by parallel transport along a differentiable curve joining $p$ and $q$. It is important to note that that this identification  depends on the choice of curve joining $p$ and $q$, as may be seen by considering parallel transport on a round $2$-sphere in Euclidean $3$-space.  

The rather technical machinery we have recalled in the preceding paragraphs is necessary to generalise classical notions from the geometry of differentiable surfaces in three-dimensional euclidean space, where one may differentiate vector fields and parallel transport vectors on a surface by appealing to extrinsic objects in the ambient euclidean space.

A differentiable curve $\gamma\colon (a,b)\to M$ is a \textit{geodesic} if $\nabla_{\dot{\gamma}}\dot{\gamma}=0$ for all $t\in (a,b)$, i.e. if its tangent vector field is parallel.  If $[c,d]\subset (a,b)$ and $\gamma\colon[c,d]\to M$ is a geodesic, the restriction of $\gamma$ to $[c,d]$ is a \textit{geodesic segment joining $\gamma(c)$ and $\gamma(d)$.} Geodesics generalize the notion of curves with zero acceleration on surfaces in Euclidean space and locally minimise distance between its points.


Given a point $p\in M$ and a tangent vector $v\in T_pM$, the existence and uniqueness theorem of ordinary differential equations implies that there exists a unique maximal geodesic $\gamma_v\colon(-\varepsilon,\varepsilon)\to M$ with initial point $p$ and velocity $v$, i.e.\ with $\gamma_v(0)=p$ and $\gamma'_v(0)=v$. With this fact in hand, we may  now define the \emph{exponential map}, which maps a subset of $T_pM$ to $M$.  

\begin{definition}[Exponential map]
Let $M$ be a Riemannian manifold, fix $p\in M$, and, for each $v\in T_pM$, let $\gamma_v$ be the unique maximal geodesic with $\gamma_v(0) =p$ and $\gamma'_v(0)=v$. Let 
\[
E_p =\{\,v\in T_pM \mid \gamma_v \text{ is defined on an interval containing [0,1]} \,\}. \]
The \textit{exponential map of $M$ at $p$} is the map $\exp_p\colon E_p\subset T_pM\to M$ given by 
$\exp_p(v) = \gamma_v(1)$.
\end{definition}

The exponential map is smooth and, for each $v\in T_pM$, the geodesic $\gamma_v$ with initial point $p$ and velocity $v$ is given by $\gamma_v(t)=\exp_p(tv)$ for all $t$ where $\gamma_v(t)$ and $\exp_p(tv)$ are defined. The Hopf--Rinow theorem asserts that,    a Riemannian manifold $M$ is complete if, and only if,  for every $p\in M$ and every $v\in T_pM$, the maximal geodesic $\gamma_v$ with initial point $p$ and velocity $v$ is defined for all $t\in \R$, i.e. $M$ is \textit{geodesically complete}. It follows that, for complete Riemannian manifolds, $\exp_p$ is defined on all of $T_pM$ for every $p\in M$.   A fundamental corollary to the Hopf--Rinow theorem is the fact that any two points in a complete Riemannian manifold may be joined by a shortest geodesic segment, i.e.\ a geodesic segment whose length is the distance between its endpoints. Note that such a curve may not be unique, as the example of geodesics joining antipodal points on a round sphere immediately shows.




To conclude this section, let us recall that one may define a natural Radon measure on an $n$-dimensional Riemannian manifold $(M,g)$ so that, in the case where $M$ is isometric to Euclidean space, this measure coincides with the Lebesgue measure (see \cite[Ch.\ II, Section 5]{sakai1998}). We will refer to this measure as the \textit{(Riemannian) volume} and denote it by $\vol$. The integral with respect to the Riemannian volume of a continuous function $f\colon M\to \R$ whose support is contained on the domain of a coordinate chart $\varphi\colon U\subset M\to \tilde{U}\subset \R^n$ with $\varphi = (x_1,\ldots,x_n)$ is given by
\[
\int_{M}f \dvol = \int_{\tilde{U}} \left(\,f\cdot \sqrt{\det(g_{ij})}\,\right)\circ\varphi^{-1}dx_1\cdots dx_n,
\]
where $(g_{ij})$ is the matrix whose entries are the coefficients of $g$ in the local coordinates $(x_1,\ldots,x_n)$ determined by $\varphi$ (see Equation \eqref{eq:g_ij}). Maps that preserve the volume are called \textit{volume-preserving} and will play a role in subsequent sections. Recall that, given a measure space $(X,\mu,\mathcal{A})$, a measurable space $(Y,\mathcal{B})$, and a measurable map $f$ between them, the \textit{push-forward} of $\mu$, denoted by $f_{\#}\mu$, is the measure on $(Y,\mathcal{B})$ given by
\[
(f_{\#}\mu)(B) = \mu(f^{-1}(B))
\]
for all $B\in \mathcal{B}$. A measurable map $h\colon X\to Y$ between measure spaces $(X,\mu,\mathcal{A})$ and $(Y,\nu,\mathcal{B})$ is \textit{measure preserving} if $h_{\#}\mu = \nu$. In particular, a continuous map $f\colon M\to N$ between Riemannian manifolds $M$ and $N$ is \textit{volume preserving} if $f_{\#}\vol_M = \vol_N$. Note that every isometry between Riemannian manifolds is a volume-preserving map, but volume-preserving diffeomorphisms are not necessarily isometries, as may be readily seen in the case of Euclidean space.

From now on, we will assume  all probability distributions on a given Riemannian manifold to be absolutely continuous with respect to the Riemannian volume.


\section{Wrapped Probability Distributions}
\label{sec:wrapped_prob_distns}

``Wrapping'' Euclidean probability distributions to obtain distributions on non-Euclidean spaces has a long history. We may think of ``wrapping'' a Euclidean distribution $\mu$ on a given non-Euclidean space $X$ as a procedure that assigns to $\mu$ a new distribution $w(\mu)$ on $X$, the ``wrapped'' distribution. In practice, one wishes to ``wrap'' $\mu$ in such a way that one can exercise certain control on $w(\mu)$ in terms of known properties of $\mu$. Notable examples can be found in directional statistics (see \cite[Section 3.5]{mardia1999}), such as the wrapped normal on the unit circle. This idea has also been 
considered more recently in hyperbolic space (see \cite{nagano2019, skopek2019mixed}). In this section we describe general procedures for wrapping distributions on a Riemannian manifold $N$ onto a target Riemannian manifold $M$, 
and present several examples which include some previously proposed approaches.
We will focus on distributions that arise as push-forwards. More precisely, given a differentiable map $h \colon N \rightarrow M$ and a probability distribution $\mu$ on $N$, we define a wrapped distribution $w(\mu)$ on $M$ by letting  
\begin{align}
    w(\mu) = h_{\#} \mu, \label{eq:push_forward}
\end{align}
i.e.\ we let the wrapped distribution be the push-forward of $\mu$. We call the map $h\colon N \to M $ the \textit{wrapping map} and  say that the (wrapped) distribution $w(\mu)$ is obtained by \textit{wrapping $\mu$ via $h$.} 
To use this construction in practice, it is important to consider manifolds $N$ for which probability measures are well understood and explicitly defined. We therefore restrict our attention to the case where $N$ is  Euclidean space to obtain wrapped distributions on $M$ by transforming Euclidean distributions.
We will first suppose that the wrapping map is a diffeomorphism. Note that this  simple assumption already contains the case in which $M$ is a hyperbolic space, which we will discuss in detail. We will then discuss the case in which the wrapping map is volume-preserving and present specific constructions of wrapped Euclidean distributions on round spheres. 

Note that, from an abstract point of view, we may define a wrapped probability distribution on a measurable space $(Y,\mathcal{B})$ as the push forward $h_\#\mu$ of a probability measure $\mu$ on a measure space $(X,\mathcal{A}, \mu)$ by a measurable map $h\colon(X,\mathcal{A},\mu) \to (Y,\mathcal{B})$. This allows, for example, for the consideration of non-smooth spaces more general than Riemannian manifolds (see, for example, \cite{burago2001,gigli2020}). Since our focus is on smooth spaces, however, we will not pursue this point further in this article.


\subsection{Wrapping Euclidean distributions via diffeomorphisms.}
\label{sec:generic_wrapped_distn}

Let $M$ be a Riemannian manifold diffeomorphic to $k$-dimensional Euclidean space $\R^k$. Given a diffeomorphism $h \colon 
\R^k \rightarrow M$ and a probability distribution $\mu$ on $\R^k$ with density function $f$, we let 
$f_{w}$ denote the  density function of the  probability distribution obtained by wrapping  $\mu$  via the diffeomorphism $h$. The change of variables formula implies that
 \begin{align}
      f_{w} = \dfrac{f\circ h^{-1}}{|\det(dh^{-1})|},
 \end{align}
 which gives an explicit expression for the density function $f_M$ in terms of the Euclidean density $f$ and the wrapping map $h$. This in turn allows for straightforward sampling from $f_w$, as described in  
 Algorithm \ref{alg:sample_generic_wrapped_distn}. Note, however, that it is not obvious which choices for the Euclidean distribution $f$ and wrapping diffeomorphism $h\colon \R^k\to M$ lead to meaningful and useful distributions on $M$. To address this, we must consider the properties of these objects concurrently with properties of the manifold $M$. As an example, we may be interested in obtaining a unimodal wrapped distribution by taking a  unimodal $f$ and a wrapping diffeomorphism which preserves this property.

\begin{algorithm}[t!]
\caption{Sampling $n$ points from the wrapped distribution $f_w$} \label{alg:sample_generic_wrapped_distn}

\textbf{Input}: Euclidean density function $f$, $k$-dimensional Riemannian Manifold $M$, diffeomorphism $h \colon \R^k \rightarrow M$ \\ 
\textbf{Output}: Sample $\{z_i\}_{i=1}^n$ from wrapped distribution $f_{w}$ \\
\For{$i \in 1,2,\dots,n$}{
 Sample $u_i \in \R^{k}$ according to the probability distribution with pdf $f$ \\
 Apply the diffeomorphism to obtain $z_i = h(u_i) \in M$}
\end{algorithm}

\begin{figure}
\begin{center}
\includegraphics[width=1 \textwidth]{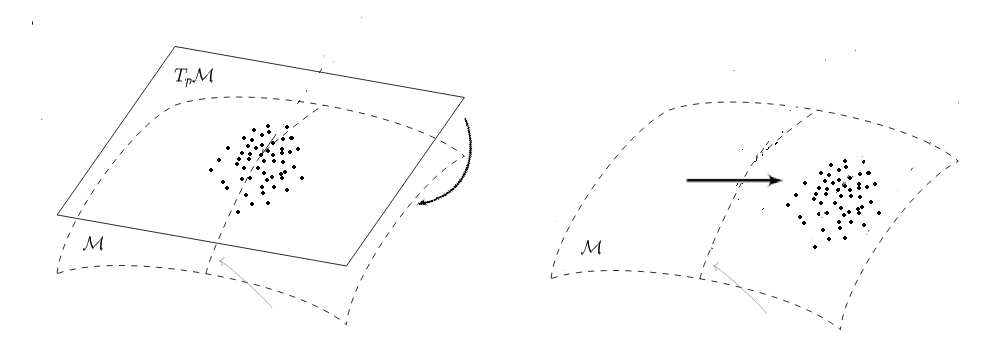}
\end{center}
\caption{Described procedure in a $2$-dimensional manifold setting. Left: Distribution in the tangent plane is attached to the manifold through an area preserving map. Right: The distribution could be transferred from a point of interest $\gamma(0)$ to another arbitrary point using an isometry of the manifold.}

\end{figure}


\subsubsection{Examples of wrapped Euclidean distributions on hyperbolic space}
\label{sec:wrapped_distn_hyperboloid}

 We will now discuss particular instances of wrapped distributions via diffeomorphisms by explicitly constructing wrapped Euclidean distributions on $k$-dimensional hyperbolic space $\HH^k_R$ of radius $R>0$. In this case, since $\HH^k_R$ has constant negative sectional curvature, it follows from the Cartan--Hadamard theorem (see, for example, \cite[Ch.\ 7, Theorem 3.1]{doCarmo1992}) that the  exponential map $\exp_p\colon T_p\HH^k_R \cong \R^k \to \HH^k_R$ at any point $p\in \HH^k_R$ is a diffeomorphism and provides a natural choice of wrapping map. We may generate further wrapping diffeomorphisms by composing the exponential map with an isometry of $\HH^k$,  allowing us to  ``relocate'' the wrapped distribution on hyperbolic space.



For ease of computation, we will work with the hyperboloid model of hyperbolic space (see, for example, \cite{anderson2006} or \cite[Ch.\ 3]{lee2006}). To define this model, recall first that the \textit{Minkowski inner product} between two vectors $x,y \in \R^{k+1}$ is given by
\begin{align}
  \langle x , y \rangle = \sum_{i=1}^{k} x_i y_i - x_{k+1} y_{k+1}. \label{eq:lorenzt_inner_product}
\end{align}
Given $R>0$, 
let
\begin{align}
  \HH^k_R = \{ x \in \R^{k+1} \mid \langle x, x \rangle = -R^2 \mbox{ and } x_{k+1} > 0 \}.  \label{eq:hyperboloid_embedding}
\end{align}

The space $\HH^k_R$ is the upper sheet of the two-sheeted hyperboloid in $\R^{k+1}$ given by the equation 
\[
x_{k+1}^2 - \sum_{i=1}^kx_i^2 = R^2
\]
and is a $k$-dimensional (embedded) submanifold of $\R^{k+1}$. For each $p\in \HH^k_R\subset \R^{k+1}$, we may identify $T_p\HH^k_R\cong \R^k$ with a $k$-dimensional subspace of $\R^{k+1}$ whose elements are tangent vectors $\gamma'(0)$ with $\gamma\colon (-\varepsilon,\varepsilon)\to \HH^k_R\subset \R^{k+1}$ a smooth curve with $\gamma(0)=p$. For each $p\in \HH^k_R$ and each $v,w \in T_p\HH^k_R \subset \R^{k+1}$, we define
	\[
	g_p(v,w) = \langle v,w\rangle.
	\]
	That is, $g_p$ is the restriction to $T_p\HH^n$ of the Minkowski inner product on $\R^{k+1}$. One can show that the bilinear form $g_p$ on $T_p\HH^n$ is
	 positive definite and defines a Riemannian metric $g$ on $\HH^k_R$. The Riemannian manifold $(\HH^k_R,g)$ is the \textit{hyperboloid model of $k$-dimensional hyperbolic space of radius $R>0$} and has constant sectional curvature $-1/R^2<0$. We will refer to $\HH^k_1$ simply by \textit{hyperbolic space} and denote it by $\HH^k$.

\subsubsection*{Wrapped distributions after Nagano et al. \cite{nagano2019}}

As a first example, we discuss the wrapped Gaussian distribution on hyperbolic space recently proposed by Nagano et al. in \cite{nagano2019}. Although their construction was carried out for hyperbolic space $\HH^k_1$, it easily generalizes to hyperbolic space $\HH^k_R$ of arbitrary radius and we consider the general case here (cf.\ \cite{skopek2019mixed}). Setting $R=1$ yields the results in \cite{nagano2019}.

Let $p_o = (0,\ldots,0,R)\in \HH^k_R \subset \R^{k+1}$. We will refer to this point as the `vertex'' of the hyperboloid model. Fix now a second point $p\in \HH^k_R$. Our wrapping map will be the diffeomorphism given by 
\begin{align}
\label{eq:wrapping_map_nagano}
    h_p = \exp_{p} \circ\, \PT_{p_o \rightarrow p}\colon T_{p_o} \HH^k_R \rightarrow \HH^k_R,
\end{align}
where $\PT_{p_o \rightarrow p}\colon T_{p_o}\HH^k_R\to T_p\HH^k_R$ is the parallel transport map from $p_o$ to $p$ along a geodesic $\gamma_{p_{o}p}$ joining $p_0$ with $p$ and $\exp_p\colon T_p\HH^k_R\to \HH^k_R$ is the exponential map at $p$. The fact that the wrapping map $h_p$ is a well-defined diffeomorphism relies heavily on properties of negatively curved spaces. First, by the Cartan--Hadamard theorem (see \cite[Theorem 4.1]{sakai1996}), there exists a unique geodesic between every pair of points $p,q \in M$ when $M$ is a complete Riemannian manifold with non-positive sectional curvature. This ensures that $\PT_{p_o \rightarrow p}$ is uniquely defined for any point $p\in \HH^k_R$. Second, also by the Cartan--Hadamard theorem, $\exp_p\colon T_p\HH^k_R\to \HH^k_R$ is a diffeomorphism. Since $\PT_{p_o \rightarrow p}\colon T_{p_o}\HH^k_R\to T_p\HH^k_R$ is a linear isometry and, therefore, a diffeomorphism, the wrapping map $h_p=\exp_p\circ\, \PT_{p_o\to p}$ is also a diffeomorphism, being the composition of diffeomorphisms. 

Since
$T_{p_o}\HH^k_R = \{\, (x_1,\ldots,x_k,0)\in \R^{k+1}\,\}\cong \R^k$, we may use the  map $h_p$ defined in \eqref{eq:wrapping_map_nagano} to wrap a given Euclidean distribution $\mu$ onto $\HH^k_R$ while mapping the origin of $T_{p_o}\HH^k_R
\cong \R^k$ to $p\in \HH^k_R$. This is of interest, for example, when the origin in Euclidean space is the mean of $\mu$. The choice of $p_o\in \HH^k_R$ makes it straightforward to consider samples $u_i$ from the Euclidean distribution $\mu$ as points in $T_{p_o}\HH^k_R$ by appending a zero so that we have $v_i = (u_i,0)\in T_{p_o}\HH^k_R$. We may then obtain a sample on $\HH^k_R$ by considering the points $h_p(v_i)$. Algorithm~\ref{alg:sample_nagano} describes this procedure and a visualisation is given in Figure \ref{fig:nagano_ex} for the case where $\mu$ is the Euclidean Gaussian distribution, which we presently discuss in more detail. Note that the procedure we have just described is defined, more generally, when the codomain of the wrapping map $h_p$ in \eqref{eq:wrapping_map_nagano} is a Riemannian manifold which satisfies the hypotheses of the Cartan--Hadamard theorem. Such spaces are known as \textit{Cartan--Hadamard manifolds}.

Following \cite{nagano2019}, let $\nu$ be the Euclidean Gaussian distribution with zero mean on $\R^k\cong T_{p_o}\HH^k_R$ and density function $N(\cdot \mmid 0, \Sigma)$. We then wrap it via the diffeomorphism $h_p$ to obtain a wrapped normal distribution $w(\nu)$ on $\HH^k_R$ given by $w(\nu)=(h_p)_{\#}v$. We may interpret $p\in \HH^k_R$ as the ``location'' of the wrapped normal.


The change of variables formula implies that the density function of the wrapped distribution $(h_p)_{\#}\nu$  is given by
\begin{align}
  f_{w}( z ) = \dfrac{1}{ (2 \pi)^{k/2} | \Sigma |^{-1/2} } \exp \left(-\frac {1}{2} u^T \Sigma^{-1} u \right)  \left( \dfrac{ \| u \| }{ \sinh \| u \| } \right)^{k-1}, \label{eq:nagano_pdf}
\end{align}
where $u = [(\exp_p \circ \mbox{PT}_{(0,0,\dots,0,1) \rightarrow p})^{-1}(z)]_{-(k+1)}$ and $\| u \| = \sqrt{ \langle u, u \rangle }$ for the inner-product  defined in \eqref{eq:lorenzt_inner_product}. The notation $x_{-(a)}$ represents the vector obtained from $x$ by removing its $a^{th}$ element. We refer to \cite{nagano2019,skopek2019mixed} for the derivation of this expression.

We note here that there also exist analogous constructions in other models of hyperbolic geometry, such as the Poincar\'{e} disk, and the motivation for deriving these expressions for the hyperboloid is purely one of computational convenience. 

\begin{algorithm}[t!]
\caption{Sampling $n$ points from the wrapped hyperbolic Gaussian of \cite{nagano2019}} \label{alg:sample_nagano}

\textbf{Input}: Gaussian $N_{k-1}(\cdot | 0, \Sigma)$, hyperbolic mean $p \in \HH_1^k$, $PT_{p_0 \rightarrow p} \colon T_{p_0} \HH^k \rightarrow T_p \HH_1^k$ and $\exp_p \colon T_p \HH_1^k \rightarrow \HH_1^k$ \\
\textbf{Output}: Sample $\{z_i\}_{i=1}^n$ from wrapped Hyperbolic Gaussian $N_{\HH_1^k}(\cdot | \mu, \Sigma)$ \\
\For{$i \in 1,2,\dots,n$}{
Sample $u_i \in \R^{k-1}$ according to $N_{k-1}(\cdot | 0, \Sigma)$ \\
Associate each $u_i$ with a point $v_i \in T_{(0,0,\dots,0,1)} M$ by taking $v_i = [u_i, 0]$ \\
Transform $v_i$ onto $z_i \in \HH^{k}$ by taking $z_i = \left(\exp_{p} \circ \mbox{PT}_{(0,0,\dots,0,1) \rightarrow p}\right)(v_i)$
}
\end{algorithm}

\begin{figure}[t]
\begin{center}
\includegraphics[width=1 \textwidth]{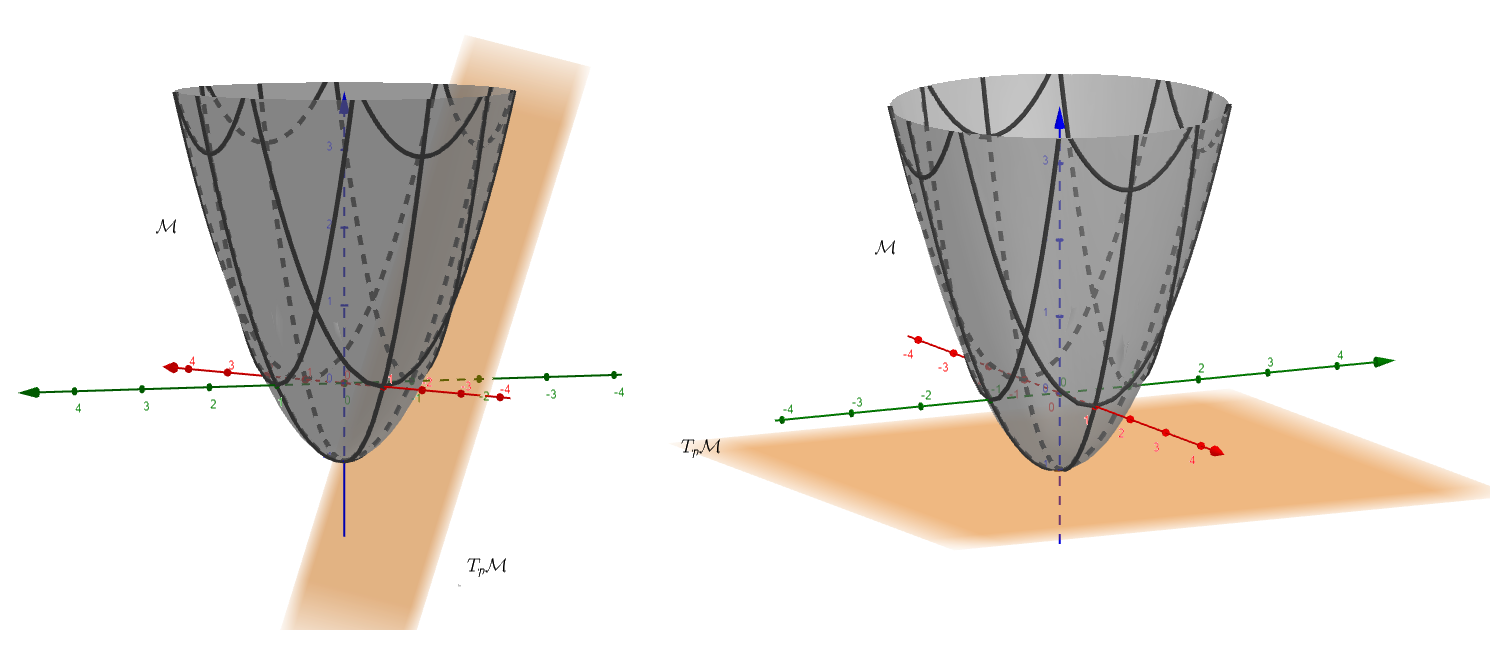}
\end{center}
\caption{Left: Way to apply the exponential map, such as in \cite{nagano2019} to obtain $z_i \in \HH_1^k$.  Right: Way to apply the Lambert map \L_p to obtain $z_i \in \HH_1^k$. In both cases, tangent plane is wrapped around the hyperboloid. } \label{fig:nagano_ex}
\end{figure}

\subsubsection*{Modifications to the wrapped Gaussian of \cite{nagano2019}}

The wrapped Gaussian construction in the previous subsection relies on parallel transport and the exponential map. Whilst these are natural choices, there also exist alternatives which may be explored for hyperbolic space.

Firs, since $\HH^k_R$ is homogeneous, it is possible to consider applying isometries in place of parallel transport. Recall from Section~\ref{sec:background} that an isometry $\varphi \colon \HH_R^k \rightarrow \HH_R^k$ is a distance-preserving map and, for the hyperboloid model, the isometry $\varphi$ which satisfies $\varphi(p_0)=p$ for $p_0 = (0,0,\dots,0,1)$ is given by the matrix
\begin{align}
    \varphi_{p_0 \rightarrow p} =  \left[ e_1 | e_2 | \dots | e_d | p \right], \label{eq:isom_hyperboloid}
\end{align}
where $e_i$ is the vector with $i^{th}$ element equal to 1 and all other elements equal to 0. Given an explicit expression for the isometries, we can instead obtain a wrapped distribution by applying the diffeomorphism 
\begin{align}
    h_{\tilde{\theta}}(v) = \left(\varphi_{p_0 \rightarrow p} \circ \exp_{p_0} \right)(v) \colon T_{p_0} \HH_1^k \rightarrow \HH_1^k, \label{eq:isom_exp_hyperboloid}
\end{align}
where $\tilde{\theta} = (p_0, p)$.
 
It is straightforward to adapt the procedure in Algorithm \ref{alg:sample_nagano} to obtain samples from the wrapped distribution defined by the diffeomorphism \eqref{eq:isom_exp_hyperboloid}. Furthermore, since the the isometry \eqref{eq:isom_hyperboloid} has determinant one, the probability density function of this distribution is the same as in \eqref{eq:nagano_pdf}. This follows since parallel transport also has a determinant equal to one, so that the Jacobian in \eqref{eq:nagano_pdf} corresponds only to the exponential map (see \cite{nagano2019} for details).

We note here that isometries and the exponential map commute in hyperbolic space (see \cite[Proposition 5.9]{lee2006}). Therefore, we may equivalently apply the diffeomorphism 
\begin{align}
    h_{\tilde{\theta}}(v) = \left(\exp_p \circ \colon d \varphi_{p_0 \rightarrow p} \right)(v) \colon T_{p_0} \HH_1^k \rightarrow \HH_1^k,
\end{align}
where $d \varphi_{p_0 \rightarrow p} \colon T_{p_0} \HH^{k} \rightarrow T_{p} \HH^{k}$ is the mapping between tangent planes induced by $\varphi_{p_0 \rightarrow p}$. The ordering in \eqref{eq:isom_exp_hyperboloid} is chosen for computational convenience and we note that we cannot change the order of the construction given in the previous section.

\subsection{Wrapping Euclidean distributions via volume-preserving maps}


In contrast with \cite{nagano2019}, where parameters are not associated directly with mean and variance, this family of well behaved distribution are easy to handle in statistical analysis because we can have control over their distributions, associate them with other distributions (i.e. through CLT) and we can sample easily from them. More specifically, the Wrapped normal Gaussian Distribution we present is a symmetric and potentially unimodal distribution. It is a type of continuous probability distribution which is symmetric to the mean and the second parameter represents the variance analogue of normal gaussian distribution. The majority of the observations cluster around the central peak point. Fitting this distribution to data makes our life simpler because the association with normal distribution, for which we know its statistical properties. In short, you will be able to find software much quicker so that you can calculate them fast and quick. Moreover, as mentioned above advantages of this parametric distribution is that they give you real information regarding the population which can be controlled by its parameters. For example, in terms of variance, this parametric distribution can perform quite well when they have spread over when data happens to be different. Another benefit of this type of parametric distributions, again due to the association with the normal includes statistical power which means that it has more power than other tests. Therefore, you will be able to find an effect that is significant when one will exist truly.

\subsubsection{Wrapping on hyperbolic space via volume-preserving maps}

Secondly, as an alternative to the exponential map, we can consider volume-preserving maps from the tangent plane to the manifold. A particular example is given by the Lambert equal-area projection from $\R^2$ to $\HH_1^2$ which is given by
\begin{align}
    \lambert_{p_0}(u) = \left( K \sinh \left( 2 \sinh \left( \dfrac{ \sqrt{u_1^2 + u_2^2}}{2S} \right) \right)  \dfrac{u_1}{ \sqrt{u_1^2 + u_2^2} }, K \sinh \left( 2 \sinh \left( \dfrac{ \sqrt{u_1^2 + u_2^2}}{2S} \right) \right)  \dfrac{u_2}{ \sqrt{u_1^2 + u_2^2} }, K \cosh \left( 2 \sinh \left( \dfrac{ \sqrt{u_1^2 + u_2^2}}{2S} \right) \right) \right)
\end{align}\label{eq:lambert_hyperboloid} 
where $u=(u_1,u_2) \in \R^2$ and $\lambert(u) \in \HH_1^2$ and $S = 1 / \sqrt{K}$. \eqref{eq:lambert_hyperboloid} is constructed at $p_0 = (0,0,1)$ by mapping disks in $\R^2$ to disks in $\HH_1^2$ so that their area is preserved. Note for this map that $(0,0) \mapsto p_0$. 

As in the construction of \cite{nagano2019}, it is straightforward to connect a point $u$ in $\R^2$ with $v \in T_{p_0} \HH_1^2$ by taking $v = [u, 0]$. We can therefore view \eqref{eq:lambert_hyperboloid} as a map from $T_{p_0} \HH_1^k \rightarrow \HH_1^k$ and, using this, we propose the diffeomorphism 
\begin{align}
    h_{\tilde{\theta}}(v) = \left( \varphi_{{p_0} \rightarrow p} \circ \lambert_{p_0} \right) (v) \colon T_{p_0} \HH_1^k \rightarrow \HH_1^k
\end{align}
to construct a wrapped hyperbolic distribution, where $\varphi_{p_0 \rightarrow p}$ is as defined in \eqref{eq:isom_hyperboloid} and $\tilde{\theta} = (p_0, p)$.

As for the modification of \cite{nagano2019} using isometries, it is straightforward to adapt Algorithm \ref{alg:sample_nagano} to obtain samples from this wrapped distribution. However, in contrast to \cite{nagano2019}, the density function of this wrapped Gaussian is given by
\begin{align}
  p \left( z | p, \Sigma \right) = \dfrac{1}{ (2 \pi)^{k/2} | \Sigma |^{-1/2} } \exp \left(-\frac {1}{2} u^T \Sigma^{-1} u \right), \label{eq:lambert_isom_pdf}
\end{align}
where $u = \left[ \left( \varphi_{p_0 \rightarrow p} \circ \lambert_{p_0} \right)^{-1} (z) \right]_{-(k+1)}$. This expression follows from the Lambert map bring area-preserving and therefore having determinant equal to 1.

\subsection{Examples of wrapped Euclidean distributions on the round sphere}
\label{sec:wrapped_distn_sphere}


In this section, we now turn our attention to the sphere. We define this in Section \ref{sec:sphere_def} and discuss wrapped Gaussians in Section \ref{sec:sphere_wrapped_gaussian}.

\subsubsection{Defining the sphere}
\label{sec:sphere_def}

The $k$-dimensional sphere expressed as an embedding in $\R^{k+1}$ is given by 
\begin{align}
 S^k = \{ x \in \R^{k+1} \: | \: \| x \| = K^2 \}, 
\end{align}
where $\| x \| = \sqrt{  \sum_{i=1}^{k+1} x_i^2 }$ for $x = (x_1, x_2, \dots, x_{k+1})$.

\subsubsection{Wrapped Gaussian on the sphere}
\label{sec:sphere_wrapped_gaussian}


Spherical geometry, like hyperbolic geometry, is homogenous and, unlike hyperbolic geometry, has positive curvature. This means that we can define similar diffeomorphisms to those presented in Section \ref{sec:wrapped_distn_hyperboloid}, but care must be taken in some definitions.

In the sphere, the Lambert area-preserving map (see \cite{borradaile2003}) constructed at $p_0 = (0,0,-1)$ from $D = \{x \in \R^2 | \| x \| < \sqrt{2}\} \subset \R^2$ to $S^2$ is given by
\begin{align}
\lambert_{p_0}(u) = \left( \sqrt{1 - \dfrac{u_1^2 + u_2^2}{4 K}} u_1, \sqrt{1 - \dfrac{u_1^2 + u_2^2}{4 K}} u_2, \dfrac{u_1^2 + u_2^2}{2 K} - K \right),
\end{align}\label{eq:lambert_sphere}
where $u = (u_1, u_2) \in D$. As in the previous section, we can connect $u$ to the tangent plane by taking $v = [u,0] \in T_{p_0} \colon S^k$. Given this, we are then able to apply a procedure analagous to the one described in Section \ref{sec:wrapped_distn_hyperboloid} whereby points sampled according to a Euclidean Gaussian are associated with the tangent plane and then mapped onto the sphere via a diffeomorphism. For example, we may take
\begin{align}
    h_{\tilde{\theta}}(v) = \left( \varphi_{p_0 \rightarrow p} \circ \lambert_{p_0} \right)(v) : D \subset T_{p_0}S^2 \rightarrow S^2, \label{eq:sphere_lambert_isom}
\end{align}
where $p_0 = (0,0,-1)$, $\tilde{\theta} = (p_0, p)$, $\lambert_{p_0}$ is as defined in \eqref{eq:lambert_sphere} and $\varphi_{p_0 \rightarrow p}$ denotes an isometry of $S^2$ which satisfies $\varphi (p_0) = p$.

When applying \eqref{eq:sphere_lambert_isom}, care must be taken since a standard bivariate Gaussian lies in $\R^2$. For this procedure to produce a valid probability distribution, we should restrict this distribution to lie in $D$ so that we have
\begin{align}
    p^D(u | 0, \Sigma) = \dfrac{p(u | 0, \Sigma)}{ \int_D p(\tilde{u} | 0, \Sigma) \,d \tilde{u}}, \label{eq:truncate_gaussian}
\end{align}
where $p(u | 0, \Sigma)$ denotes the pdf of $N(0, \Sigma)$. The integral required to evaluate \eqref{eq:truncate_gaussian} is given by $1 - \exp(-1/\sigma^2)$ when $\Sigma = \sigma^2 I_2$ and can be expressed as an infinite series for the case when $\Sigma = \mbox{diag}( \sigma_1^2, \sigma_2^2)$ (see \cite{gilliland1962} for details). Intuitively, this restriction does not impact the properties of the Gaussian `too much' if the marginal variances are `small enough'. As an example, when $\Sigma = \sigma^2 I_2$ we can examine the normalising constant in \eqref{eq:truncate_gaussian} and observe that this is close to 1 when $\sigma $ is roughly less than 0.4. Finally, we also comment that similar restrictions for the exponential map may also be applied in this setting to ensure the diffeomorphism $h_{\tilde{\theta}}$ is a bijection though this does not appear to be discussed in the existing literature.

\begin{figure}
\begin{center}
\includegraphics[width=1 \textwidth]{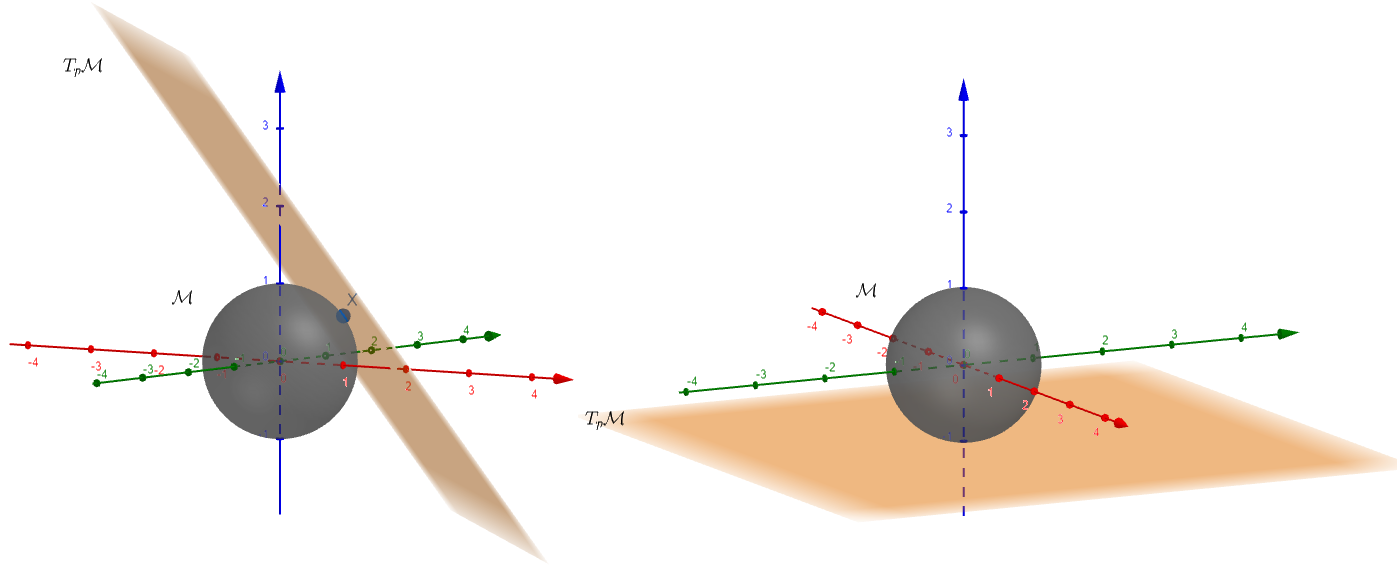}
\end{center}
\caption{Left: Way to apply the exponential map, such as in \cite{mathieu2019} and associate with $T_{(0,0,-1)} \colon S^k$.  Right: Way to apply the Lambert map \L_p to obtain $z_i \in \colon S^k$, by using as an anchor point (0,0,-1) and then composite with other isometries. In both cases, tangent plane is wrapped around the sphere.}
\end{figure}

\section{Theoretical results for manifolds with isometries}
\label{sec:theory}

The theoretical investigation that we performed in this paper answers a number of open questions and brings forth plenty of topics for future research. This includes theoretical issues, as well as questions about the possibility of applying the theoretical results in this paper in an applied statistical-topological settings. We extend the above theory in order to transfer distribution from one Riemannian manifold to another one. From example, from a $\R^n$ to $\HH^n$. The space geometry forms properties of the distributions, which a practitioner might want to control, such as symmetry in its moments and unimodality e.g. sending a distribution from a Euclidean space to a space with multiple curvatures might change the symmetry and unimodality. Sending a distribution from a space with a unique curvature to another space with unique curvature does not change those properties. More specifically we follow the procedure described below:

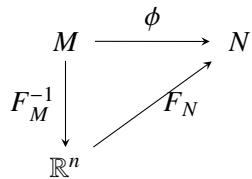
\begin{figure}[H]
  \centering
\begin{tikzpicture}
  \matrix (m) [matrix of math nodes,row sep=3em,column sep=4em,minimum width=2em]
  {
    M & N \\
    \R^n  \\};
  \path[-stealth]
    (m-1-1) edge node [left] {$F_{M}^{-1}$} (m-2-1)
            edge node [above] {$\phi$} (m-1-2)
    (m-2-1) edge node [right] {$F_{N}$} (m-1-2);
\end{tikzpicture}
\caption{Projecting distributions from one space to another, with intermediate step the euclidean space.}
\label{fig:prop5.9}
\end{figure}

, with $\phi=F_{N} \circ F^{-1}_{M}$. The unimodality arises from the 2-point homogeneous spaces and more general $n$-points homogeneous spaces. The questions that naturally arise is which properties are preserved. The following lemmas refer to the distributions constructed by our approach.\\

Riemannian spaces are endowed with geometries that are known to
be well-suited both for network data and representation learning of data
with an underlying hierarchical structure. In the next two subsections, we present theoretical results which prove the direct connection the Gaussian-like distribution on Riemannian space whose density can be evaluated analytically and differentiated with respect to the parameters. This is happening because of the nature of the mapping, which is measure preserving, from the Euclidean space to other spaces. As a consequence, we show that the properties of the Gaussian distribution in Euclidean space can be used directly to other spaces. This equips distributions in different Reimannian Spaces with analytic properties that
could never have been considered before. 

\subsection{Theoretical results}

\begin{lemma}
  Let $M$ be an $n$-dimensional complete Riemannian manifold. Fix $p \in M$ and suppose that $F_{p} \colon T_p M \rightarrow M$ is a diffeomorphism. Let $\mu$ be a probability measure on $T_p M \cong \R^n$ and let $\nu = (F_p )_{\#} \mu$ be the push-forward measure on $M$.  \\

  Let $f \colon M \rightarrow M$ be an isometry of $M$ fixing $p$ so that $f(p) = p$. \\

  If $d f_p \colon T_p M \rightarrow T_p M$ preserves the measure $\mu$ so that $( df_p)_{\#} \mu = \mu$, then the isometry $f$ preserves the measure $\nu$ so that $f_{\#} \nu = \nu$.
\label{lemma:measure_pres}  
\end{lemma}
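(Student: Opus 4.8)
The plan is to reduce the statement to a single functoriality computation for push-forwards, with one geometric identity doing all of the real work. First I would record the elementary fact that push-forward is functorial, i.e.\ $(g\circ h)_{\#} = g_{\#}\circ h_{\#}$ for composable measurable maps, which follows directly from the definition $(g\circ h)_{\#}\mu(B)=\mu\bigl((g\circ h)^{-1}(B)\bigr)$. Using $\nu = (F_p)_{\#}\mu$, this immediately gives
\[
f_{\#}\nu = f_{\#}(F_p)_{\#}\mu = (f\circ F_p)_{\#}\mu,
\]
so the goal $f_{\#}\nu = \nu$ is equivalent to showing $(f\circ F_p)_{\#}\mu = (F_p)_{\#}\mu$.

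The crux of the argument is the \emph{equivariance} of the wrapping map under the isometry, namely the identity
\[
f\circ F_p = F_p\circ df_p \colon T_pM \longrightarrow M.
\]
This is the step I expect to be the main obstacle, and it is where completeness and the geometric nature of $F_p$ enter. When $F_p = \exp_p$ it is the classical fact that isometries carry geodesics to geodesics: for $v\in T_pM$ one has $f(\exp_p(v)) = \exp_{f(p)}(df_p v)$, which reduces to $\exp_p(df_p v)$ precisely because $f(p)=p$ (cf.\ the commutation of isometries and the exponential map, \cite[Proposition 5.9]{lee2006}). For the area-preserving Lambert-type maps used in the preceding section the same naturality holds, since those maps are built radially from the base point $p$ and hence commute with any isometry fixing $p$. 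I would therefore carry this equivariance as the standing structural property required of a genuine wrapping map; note that it cannot hold for a completely arbitrary diffeomorphism $F_p$ (a rotationally symmetric $\mu$ pushed forward by a non-symmetric $F_p$ produces a $\nu$ that is not invariant under a rotation $f$), so making the property explicit is essential to the lemma being correct.

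Granting the equivariance, the conclusion follows by one more application of functoriality together with the hypothesis $(df_p)_{\#}\mu = \mu$:
\[
(f\circ F_p)_{\#}\mu = (F_p\circ df_p)_{\#}\mu = (F_p)_{\#}(df_p)_{\#}\mu = (F_p)_{\#}\mu = \nu.
\]
Combining this with the reduction in the first step yields $f_{\#}\nu = \nu$, completing the argument. In writing it up I would state the equivariance of $F_p$ as an explicit hypothesis (satisfied by $\exp_p$ and by the Lambert maps of the previous section) so that the lemma reads cleanly, and I would remark that the measure-preservation of $\mu$ under the \emph{linear} map $df_p$ is exactly the Euclidean-side condition one verifies in practice, e.g.\ the rotational invariance of a centered Gaussian $N(0,\sigma^2 I)$ under orthogonal $df_p$.
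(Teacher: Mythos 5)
Your proposal is correct and follows essentially the same route as the paper's own proof: both arguments hinge on the equivariance identity $f\circ F_p = F_p\circ df_p$ and then conclude by a push-forward computation (the paper unwinds it set-by-set via $\exp_p^{-1}\circ f^{-1} = df_p^{-1}\circ\exp_p^{-1}$, while you invoke functoriality $(g\circ h)_{\#}=g_{\#}\circ h_{\#}$, which is the same computation packaged more cleanly). The one substantive difference is to your credit: the paper states the lemma for an arbitrary diffeomorphism $F_p$ but its proof only establishes the commutation for $F_p=\exp_p$ (where it is the classical naturality of the exponential map under isometries), and your observation that the conclusion genuinely fails without the equivariance hypothesis is right — e.g.\ on $M=\R^2$ with $p$ the origin, $\mu$ a standard Gaussian, $f$ a rotation by $\pi/2$, and $F_p(x,y)=(x,\,y+x^3)$, one checks $(df_p)_{\#}\mu=\mu$ yet $f_{\#}\nu\neq\nu$. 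So your write-up, which promotes the equivariance of $F_p$ to an explicit hypothesis (verified for $\exp_p$, and for the Lambert-type maps by their radial construction about $p$), is not just a faithful reconstruction but a necessary repair of the statement as it appears in the paper.
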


\begin{lemma}
  Let $M$ be an $n$-dimensional Riemannian manifold. Fix $p \in M$ and suppose that $F_{p} \colon T_p M \rightarrow M$ is a diffeomorphism. Let $\mu$ be a probability measure on $T_p M \cong \R^n$ and let $\phi$ be a measure preserving diffeomorphism $\phi \colon M \rightarrow  M$ by $f	\colon F_p \circ \phi \circ F_p^{-1}$. Then $f$ preserves the push forward measure $\nu=(F_p)_{\#} \mu$ on $M$. 
\end{lemma}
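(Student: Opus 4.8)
The plan is to reduce the whole statement to the functoriality of the push-forward, namely that for composable measurable maps $g,h$ one has $(g\circ h)_{\#} = g_{\#}\circ h_{\#}$, together with the observation that, since $F_p$ is a diffeomorphism and in particular a bijection, push-forward by $F_p^{-1}$ inverts push-forward by $F_p$. No analytic input is needed beyond the definition of push-forward, so the argument is purely formal.

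First I would pin down the hypothesis, which as written is slightly ambiguous: for the composite $f = F_p\circ\phi\circ F_p^{-1}$ to be a well-defined self-map of $M$, the diffeomorphism $\phi$ must be a self-map of the tangent space $T_pM$ that preserves $\mu$, i.e. $\phi_{\#}\mu=\mu$. Indeed $F_p^{-1}\colon M\to T_pM$, then $\phi\colon T_pM\to T_pM$, then $F_p\colon T_pM\to M$, so $f\colon M\to M$ is a diffeomorphism. This is exactly the abstract version of Lemma~\ref{lemma:measure_pres}, with the differential $df_p$ of an isometry replaced by an arbitrary $\mu$-preserving diffeomorphism $\phi$ of $T_pM$.

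The core computation is then a direct chain. Writing $\nu=(F_p)_{\#}\mu$ and applying functoriality,
\begin{align*}
f_{\#}\nu = (F_p\circ\phi\circ F_p^{-1})_{\#}\nu = (F_p)_{\#}\,\phi_{\#}\,(F_p^{-1})_{\#}\,\nu.
\end{align*}
Since $(F_p^{-1})_{\#}(F_p)_{\#} = (F_p^{-1}\circ F_p)_{\#} = (\mathrm{id}_{T_pM})_{\#}$ acts as the identity on measures, we obtain $(F_p^{-1})_{\#}\nu = (F_p^{-1})_{\#}(F_p)_{\#}\mu = \mu$. Substituting and then using the hypothesis $\phi_{\#}\mu=\mu$,
\begin{align*}
f_{\#}\nu = (F_p)_{\#}\,\phi_{\#}\,\mu = (F_p)_{\#}\mu = \nu,
\end{align*}
which is the claimed invariance $f_{\#}\nu=\nu$.

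The only thing requiring care is bookkeeping rather than any genuine obstacle: one must check that push-forward distributes over composition and that pushing forward by a map and its inverse cancel. Both follow immediately from the defining identity $(g_{\#}\lambda)(B)=\lambda(g^{-1}(B))$ on Borel sets $B$, together with $(g\circ h)^{-1}=h^{-1}\circ g^{-1}$. Hence the lemma holds by this formal cancellation, and it generalises Lemma~\ref{lemma:measure_pres} by dropping the requirement that the self-map of the tangent space arise as the differential of an isometry fixing $p$.
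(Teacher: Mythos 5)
Your proposal is correct and takes essentially the same approach as the paper: both arguments are the purely formal cancellation $(F_p^{-1}\circ F_p)_{\#}=(\mathrm{id})_{\#}$ followed by the hypothesis $\phi_{\#}\mu=\mu$, the paper unwinding the push-forward set by set where you invoke its functoriality. Your reading of the imprecisely stated hypothesis---that $\phi$ is a $\mu$-preserving diffeomorphism of $T_pM\cong\R^n$ rather than of $M$---is also exactly the version proved in the paper's appendix.
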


\begin{lemma}
Let $M,N$ be smooth manifolds and let $\mu$ be a probability measure on $M$. If $f\colon M \rightarrow N$ is a diffeomorphism and $\phi	\colon M \rightarrow M$ is a measure-preserving diffeomorphism, then the diffeomorphism $\psi=f \circ \phi \circ f^{-1}$ preserves the push-forward measure $f_{\#} \mu$.
\end{lemma}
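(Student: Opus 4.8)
The plan is to reduce the statement entirely to the functoriality of the push-forward under composition of maps, using only the definition of push-forward recalled at the end of Section~\ref{sec:background}; no Riemannian structure is needed, so $M$ and $N$ may be treated simply as measurable spaces. First I would record the composition rule: for measurable maps $h\colon X\to Y$ and $g\colon Y\to Z$ and any measure $\lambda$ on $X$, one has $(g\circ h)_{\#}\lambda = g_{\#}(h_{\#}\lambda)$. This is immediate from the definition, since for every measurable $B$,
\begin{align*}
\bigl((g\circ h)_{\#}\lambda\bigr)(B) = \lambda\bigl((g\circ h)^{-1}(B)\bigr) = \lambda\bigl(h^{-1}(g^{-1}(B))\bigr) = (h_{\#}\lambda)\bigl(g^{-1}(B)\bigr) = \bigl(g_{\#}(h_{\#}\lambda)\bigr)(B).
\end{align*}

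With this in hand, I would apply the rule twice to $\psi = f\circ\phi\circ f^{-1}$, pushing forward the measure $f_{\#}\mu$ on $N$:
\begin{align*}
\psi_{\#}(f_{\#}\mu) = (f\circ\phi\circ f^{-1})_{\#}(f_{\#}\mu) = f_{\#}\Bigl(\phi_{\#}\bigl((f^{-1})_{\#}(f_{\#}\mu)\bigr)\Bigr).
\end{align*}
The innermost term collapses by the composition rule and the fact that $f$ is a diffeomorphism: $(f^{-1})_{\#}(f_{\#}\mu) = (f^{-1}\circ f)_{\#}\mu = (\mathrm{id}_M)_{\#}\mu = \mu$. Substituting and then using the hypothesis that $\phi$ is measure-preserving, i.e.\ $\phi_{\#}\mu = \mu$, gives $\psi_{\#}(f_{\#}\mu) = f_{\#}(\phi_{\#}\mu) = f_{\#}\mu$, which is exactly the claim.

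There is no genuine analytic obstacle here; the only point requiring care is the bookkeeping of the inverses and the direction of composition, in particular verifying that $\psi$ is a well-defined diffeomorphism $N\to N$ (since $f^{-1}\colon N\to M$, $\phi\colon M\to M$, $f\colon M\to N$) and keeping track of which space each intermediate measure lives on. I would also remark that this lemma is the most general of the three: the preceding lemmas follow as special cases by taking $f = F_p$ and $N = M$ (respectively $N$ the target manifold), so the Riemannian hypotheses there are inessential to the measure-preservation conclusion and serve only to guarantee that the relevant maps $F_p$ are diffeomorphisms.
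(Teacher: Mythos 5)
Your proof is correct and is essentially the paper's own argument: the paper proves this lemma by the same chain of equalities (stated for the preceding lemma with $F_p=\exp_p$ and declared to carry over verbatim), unwinding $\psi_{\#}(f_{\#}\mu)$ through preimages and invoking $\phi_{\#}\mu=\mu$ at the last step. Your packaging of that computation as two applications of the push-forward composition rule, and your closing remark that the earlier lemmas are special cases, match the paper's own presentation, which introduces this lemma with ``more generally, the same proof yields.''
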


\begin{lemma}
Suppose we have: ($M, vol_{M}$), ($N, vol_{N})$. Let $f \colon M \rightarrow M$, $\mu$ such that $f_{\#\mu}=\mu$. Let $\lambda \colon M \rightarrow N$ with $\lambda_{\#}vol_{M}=vol_{N}$. Consider $WN_{\mu}=\lambda_{\#}\mu$. Let $\phi=\lambda \circ f \circ \lambda^{-1}$. This is a diffeomorphism in $N$. Claim $\phi \colon N \rightarrow N$ preserves $WN_{\mu}$, i.e. $\phi_{\#}(WN_{\mu})= WN_{\mu}$ Let $ B \subset N$ be measurable.
\end{lemma}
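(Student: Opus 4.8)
The plan is to prove the invariance $\phi_{\#}(WN_\mu) = WN_\mu$ by evaluating both sides on an arbitrary measurable set $B \subset N$ and unwinding the definitions of push-forward, exploiting only that $\lambda$ is a bijection and that $f$ preserves $\mu$. Before the main computation I would record the inverse of the conjugated map: since $\lambda$ is a diffeomorphism it is invertible, and from $\phi = \lambda \circ f \circ \lambda^{-1}$ we get $\phi^{-1} = \lambda \circ f^{-1} \circ \lambda^{-1}$. This is the only preparatory observation needed.

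The core of the argument is then a short chase of preimages. For measurable $B \subset N$,
\begin{align*}
\phi_{\#}(WN_\mu)(B) = WN_\mu\bigl(\phi^{-1}(B)\bigr) = (\lambda_{\#}\mu)\bigl(\phi^{-1}(B)\bigr) = \mu\bigl(\lambda^{-1}(\phi^{-1}(B))\bigr),
\end{align*}
using in turn the definition of push-forward, the definition $WN_\mu = \lambda_{\#}\mu$, and the definition of push-forward again. Substituting $\phi^{-1} = \lambda \circ f^{-1} \circ \lambda^{-1}$ and cancelling $\lambda^{-1}\circ\lambda = \mathrm{id}_M$ yields $\lambda^{-1}(\phi^{-1}(B)) = f^{-1}(\lambda^{-1}(B))$, so the last quantity equals $\mu\bigl(f^{-1}(\lambda^{-1}(B))\bigr) = (f_{\#}\mu)\bigl(\lambda^{-1}(B)\bigr)$. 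The hypothesis $f_{\#}\mu = \mu$ collapses this to $\mu\bigl(\lambda^{-1}(B)\bigr) = (\lambda_{\#}\mu)(B) = WN_\mu(B)$. Since $B$ was arbitrary, $\phi_{\#}(WN_\mu) = WN_\mu$.

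There is no genuine obstacle here; the statement is a formal consequence of conjugation together with the functoriality of push-forwards, and it mirrors the previous three lemmas. The only points requiring care are that $\lambda$ must be taken to be invertible (a diffeomorphism) for $\phi^{-1}$ and the cancellation to be meaningful, and that measurability of every preimage is automatic because $\lambda$, $f$, and $\phi$ are diffeomorphisms. I would also remark that the volume-preserving hypothesis $\lambda_{\#}\mathrm{vol}_M = \mathrm{vol}_N$ is not actually used in the invariance computation: only the bijectivity of $\lambda$ and the invariance $f_{\#}\mu = \mu$ enter. Volume-preservation is what justifies interpreting $WN_\mu$ as a bona fide wrapped distribution and links the result to the earlier lemmas, but the symmetry conclusion itself holds for any diffeomorphism $\lambda$.
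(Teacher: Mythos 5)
Your proof is correct and follows essentially the same route as the paper's: both compute $\phi_{\#}(WN_\mu)(B)$ by unwinding the push-forward definitions, using $\phi^{-1}=\lambda\circ f^{-1}\circ\lambda^{-1}$, cancelling $\lambda^{-1}\circ\lambda$, and invoking $f_{\#}\mu=\mu$. Your closing observation that the hypothesis $\lambda_{\#}\vol_M=\vol_N$ is never used is accurate (the paper's own computation does not use it either) and is a worthwhile clarification of the lemma's actual content.
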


 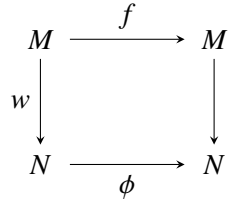
\begin{figure}[H]
  \centering
\begin{tikzpicture}
  \matrix (m) [matrix of math nodes,row sep=3em,column sep=4em,minimum width=2em]
  {
    M &  M \\
    N & {N} \\};
  \path[-stealth]
    (m-1-1) edge node [left] {$w$} (m-2-1)
            edge node [above] {$f$} (m-1-2)
            (m-2-1.east|-m-2-2) edge node [below] {$\phi$}
    node [above] {} (m-2-2)
    (m-1-2) edge node [right] {} (m-2-2);
  \end{tikzpicture}
  \caption{We want $\phi$ to preserve the $WN_{\mu}(w)$}. \label{fig:comm}
\end{figure}

\begin{lemma}
For an area-preserving mapping $f$ holds that the measure $m(f^{-1}(A))=m(A)$, where $m(\cdot)$ denotes the measure of a measurable set $A$.
\end{lemma}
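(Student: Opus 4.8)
The plan is to recognise that the statement is essentially a direct consequence of the two definitions recalled at the end of Section~\ref{sec:background}: the definition of the push-forward measure and the definition of a measure-preserving (here, area-preserving) map. To say that $f$ is area-preserving is precisely to say that $f$ preserves the ambient measure $m$, i.e.\ that $f_{\#}m = m$ as measures on the underlying measurable space. Thus the entire content of the lemma is an unwinding of these definitions, and I would organise the proof around that observation.

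First I would recall the defining formula for the push-forward: since $f$ is measurable, for any measurable set $A$ the preimage $f^{-1}(A)$ is measurable and $(f_{\#}m)(A) = m(f^{-1}(A))$. This is nothing more than the push-forward evaluated on $A$. Next I would invoke the hypothesis that $f$ is area-preserving, which by definition means $f_{\#}m = m$; evaluating both sides on $A$ gives $(f_{\#}m)(A) = m(A)$. Chaining these two equalities yields $m(f^{-1}(A)) = (f_{\#}m)(A) = m(A)$, which is exactly the asserted identity.

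The only point that requires any attention --- and the sole potential obstacle --- is to fix the meaning of ``area-preserving'' so that it coincides with the push-forward condition $f_{\#}m = m$ rather than, say, a pointwise Jacobian condition $|\det df| = 1$. If one starts instead from the Jacobian formulation, the identity follows from the change-of-variables formula for the Riemannian volume stated in Section~\ref{sec:background}, but the two formulations are equivalent and the push-forward route is immediate. I therefore expect no genuine difficulty in establishing the claim; its value is foundational, as it underpins the invariance arguments used in the preceding lemmas where a conjugated map $\phi$ is shown to preserve a wrapped measure.
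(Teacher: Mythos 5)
Your proof is correct, but it follows a genuinely different route from the paper's. You read ``area-preserving'' as the push-forward condition $f_{\#}m = m$ --- which is exactly how Section~\ref{sec:background} defines measure- and volume-preserving maps --- so that $m(f^{-1}(A)) = (f_{\#}m)(A) = m(A)$ is a purely definitional unwinding, valid for any measurable map, with no smoothness or injectivity assumptions. The paper instead works with the Jacobian formulation: it assumes $f$ is essentially injective, expresses preservation in terms of images as $m(f(A)) = m(A)$, applies the change-of-variables formula with integrand $g \equiv 1$ to obtain $m(f(A)) = \int_A |J_f(u)|\,du$, and concludes that this can equal $m(A)$ for every $A$ only if $|J_f| \equiv 1$. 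In other words, the paper's argument is really a demonstration that the set-measure condition and the unit-Jacobian condition are equivalent --- precisely the alternative formulation you flagged in your closing paragraph. Each approach buys something: yours is immediate, fully general, and consistent with the paper's own definitions in Section~\ref{sec:background}; the paper's argument, though it needs injectivity and differentiability (and silently trades preimages $f^{-1}(A)$ for images $f(A)$, a step that only injectivity licenses), is the operationally relevant bridge, since the Lambert maps are certified as area-preserving in the appendix by the computation $\det J = 1$, and it is the change-of-variables step that converts that pointwise computation into the measure-theoretic statement and hence into the wrapped density formula of Section~\ref{sec:wrapped_prob_distns}.
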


\subsection{Hyperbolic and Spherical area preserving wrapped normal distribution statistical properties}

This subsection presents probabilistic properties of the wrapped normal distribution that facilitate inference regarding the wrapped normal distribution. The extracted results are based on the fact that the construction of the wrapped normal is based on the Normal distribution, which constitutes Wrapped Normal density function easy to compute. Therefore, estimation properties within this framework ensure that the final distribution enjoys analytical properties. The propositions are presented in three categories. The first category includes propositions (4.6-4.10) that facilitate calculations. The second category (propositions 4.11-4.15 and remark 4.16) is consisted of the distribution's properties. Finally, the last category (proposition 4.17) describes asymptotic properties of the distribution. All of them enjoy simplicity due to the normal distribution flavour of the wrapped normal distribution, in which the novelty of those properties lie.\\

The invariance properties of the Riemannian symmetric space $M$, due to symmetries, can be used to characterise Gaussian distributions on $M$. $p_0$ is the origin of the hyperboloid or a point in the sphere, in euclidean metric, and here $\sigma$ and $\Sigma$ are the variance and the covariance matrix in the manifold and $\sigma_{new}$ and $\Sigma_{new}$ in the tangent plane after the mapping.The probability of samples can be computed as in \ref{sec:generic_wrapped_distn} for volume preserving maps:
\begin{align}
    h_{\tilde{\theta}}(v) = \left( \varphi_{{p_0} \rightarrow p} \circ \lambert_{p_0} \right) (v) \colon T_{p_0} M \rightarrow M
\end{align}
to construct a wrapped hyperbolic distribution, where $\varphi_{p_0 \rightarrow p}$ is as defined in \eqref{eq:isom_hyperboloid} and $\tilde{\theta} = (p_0, p)$.

\begin{align}
  p \left( z | p, \Sigma \right) = \dfrac{1}{ (2 \pi)^{k/2} | \Sigma |^{-1/2} } \exp \left(-\frac {1}{2} u^T \Sigma^{-1} u \right), \label{eq:lambert_isom_pdf}
\end{align}
where $u = \left[ \left( \varphi_{p_0 \rightarrow p} \circ \lambert_{p_0} \right)^{-1} (z) \right]_{-(k+1)}$. We will see that the Wrapped Normal distribution converges to the Gaussian distribution $K \rightarrow 0$ and to Von Mises, Inverse stereographic Gaussian distribution in sphere as goes to infinity \cite{selvitella2019geometric}. The same happens with anisotropic hyperbolic Gaussian distribution. \\

\begin{proposition}
For the mixture distribution $p( \theta ) = \sum_{i=1}^q \pi_i WN( p_i, \Sigma_i )\
$ with $\pi_i > 0$ and $\sum_{i=1}^q \pi_i = 1$, there exists $q \in \mathbb{N}$, \
and $\theta$ such that, for an arbitrary continuous distribution $f$ and $\epsilon\
 > 0$, we have $\| f - p(\theta) \|_p < \epsilon$, where $\| \cdot \|_p$ denotes t\
he $l_p$ norm. 
\end{proposition}

For MLE inference the following proposition holds:

\begin{proposition}
Let $Y \sim WN_{M} (p, \Sigma)$ and denote observations as $\underline{y} = (y_1, y_2, \dots, y_m)$, where $y_i \in M$ and $M \in \{ \mathbb{H}, \mathbb{S} \}$. The maximum likelihood estimators for $\Sigma$ is given by  
\begin{align}                              
  \hat{\Sigma} &= \dfrac{1}{m} \sum_{i=1}^m \left( h_p^{-1}(y_i) \right) \left( h_p^{-1}(y_i) \right)^T          
\end{align}                                         
where $h_p^{-1} = \left( \phi_{p_0 \rightarrow p} \circ lam_{p_0} \right)^{-1}_{[-(k+1)]}$. 
\end{proposition}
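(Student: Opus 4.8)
The plan is to reduce the likelihood to that of a zero-mean multivariate Gaussian on the tangent space and then invoke the classical matrix-calculus derivation of the sample-covariance estimator. Writing $u_i := h_p^{-1}(y_i) = \left[\left(\varphi_{p_0\to p}\circ \mathrm{lam}_{p_0}\right)^{-1}(y_i)\right]_{-(k+1)} \in \R^k$, the decisive observation is that both the isometry $\varphi_{p_0\to p}$, which has determinant one (see \eqref{eq:isom_hyperboloid}), and the Lambert map $\mathrm{lam}_{p_0}$, which is area-preserving and hence has Jacobian determinant one, contribute no factor depending on $\Sigma$ in the change-of-variables formula. Consequently, as recorded in \eqref{eq:lambert_isom_pdf}, the density of $Y$ at $y_i$ equals the Euclidean Gaussian density evaluated at $u_i$, and the $u_i$ form an i.i.d.\ sample from $N(0,\Sigma)$. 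Since the location $p$ is fixed and corresponds to zero mean in tangent coordinates, no sample mean is subtracted, and the log-likelihood separates into an additive constant plus the standard zero-mean Gaussian log-likelihood in the $u_i$:
\begin{align*}
\ell(\Sigma) = -\frac{mk}{2}\log(2\pi) + \frac{m}{2}\log\left|\Sigma^{-1}\right| - \frac{1}{2}\sum_{i=1}^m u_i^T \Sigma^{-1} u_i.
\end{align*}

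Next I would maximise $\ell$ over the cone of symmetric positive-definite matrices. Writing $S := \sum_{i=1}^m u_i u_i^T$ and using $\sum_i u_i^T\Sigma^{-1}u_i = \operatorname{tr}(\Sigma^{-1}S)$, I would differentiate with respect to the precision matrix $\Omega := \Sigma^{-1}$, invoking the identities $\partial_{\Omega}\log|\Omega| = \Omega^{-1} = \Sigma$ and $\partial_{\Omega}\operatorname{tr}(\Omega S) = S$. Setting the gradient to zero yields $\tfrac{m}{2}\Sigma - \tfrac{1}{2}S = 0$, so that $\hat{\Sigma} = \tfrac{1}{m}S = \tfrac{1}{m}\sum_{i=1}^m u_i u_i^T$, which is precisely the claimed estimator. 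Concavity of $\ell$ as a function of $\Omega$ (the log-determinant term is concave and the trace term is linear) confirms that this stationary point is the global maximiser, provided the $u_i$ span $\R^k$ so that $S$ is positive-definite; the routine verification of these matrix derivatives is the only remaining calculation.

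The main obstacle is that this clean reduction holds verbatim only for the hyperboloid, where $\mathrm{lam}_{p_0}\colon \R^k \to \HH^k$ is a \emph{global} volume-preserving diffeomorphism, so the push-forward density has no $\Sigma$-dependent normaliser. On the sphere the Lambert map is defined only on the disk $D$, so the wrapping density is built from the \emph{truncated} Gaussian of \eqref{eq:truncate_gaussian}, whose normaliser $\int_D p(\tilde u\mid 0,\Sigma)\,d\tilde u$ depends on $\Sigma$. Including this term adds $-\,m\log\!\int_D p(\tilde u\mid 0,\Sigma)\,d\tilde u$ to $\ell(\Sigma)$, and its gradient does not vanish, so strictly speaking the raw second-moment formula is exact only in the untruncated (hyperbolic) case and is an approximation on the sphere, accurate precisely in the small-variance regime where the normaliser is close to $1$ (cf.\ the discussion following \eqref{eq:truncate_gaussian}). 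I would therefore prove the statement exactly for $M=\HH^k$ and record the truncation correction as the extra ingredient required to treat $M=\mathbb{S}^k$ rigorously.
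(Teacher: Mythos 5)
Your proof is correct and follows essentially the same route as the paper's: both arguments reduce the problem to the Euclidean zero-mean Gaussian likelihood via the observation that the wrapping map $\varphi_{p_0\to p}\circ \mathrm{lam}_{p_0}$ has unit Jacobian (isometry of determinant one composed with the area-preserving Lambert map), and then obtain the sample second-moment matrix as the maximiser. The paper's proof stops at the remark that ``this calculation is the same as the standard multivariate Gaussian case,'' whereas you actually carry out the matrix differentiation in the precision parametrisation, including the concavity check and the spanning condition needed for $S$ to be positive definite, so your write-up is strictly more complete on that step.

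Where you genuinely depart from, and improve on, the paper is the spherical case. The proposition asserts the result for $M\in\{\mathbb{H},\mathbb{S}\}$, and the paper's proof treats both spaces identically, silently ignoring that on the sphere the Lambert map is defined only on the disk $D$, so the wrapped construction of Section 3 uses the truncated Gaussian whose normalising constant $\int_D p(\tilde u\mid 0,\Sigma)\,d\tilde u$ depends on $\Sigma$. Your observation that the stated estimator is therefore the exact MLE only for the hyperboloid, and on the sphere is only an approximation valid in the small-variance regime where this normaliser is close to $1$, is a correct refinement that the paper overlooks: its own log-likelihood expansion omits the $-m\log\int_D p(\tilde u\mid 0,\Sigma)\,d\tilde u$ term that the spherical density requires. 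Restricting the exact claim to $M=\mathbb{H}$ and recording the truncation correction for $M=\mathbb{S}$, as you propose, is what would be needed to make the proposition as stated rigorous.
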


For Bayesian inference, the following proposition holds:

\begin{proposition}
The inverse-Wishart is a conjugate prior for $\Sigma$ when $Y \sim WN(p, \Sigma)$.\
 If $\Sigma \sim IW(\nu, \Phi)$ and we have observations $ \underline{y} = (y_1, y\
_2, \dots, y_m)$, then $\Sigma | Y \sim IW \left(\nu + m, \Phi + \sum_{i=1}^m \left(h_p^{-1}(y_i) \right) \left(h_p^{-1}(y_i) \right)^T \right)$, where $h_p^{-1} = \left( \phi_{p_0 \rightarrow p} \circ lam_{p_0} \right)^{-1}_{[-\
(k+1)]}$ 
\end{proposition}


The next three proposition are related with the symmetry and the unimodality of the wrapped normal distribution using an area preserving map.

\begin{proposition}
 Let a bivariate normal distribution in a tangent plane. After the area preserving mapping the resulting distribution will be symmetric.
\end{proposition}

\begin{proposition}
Let a partially monotonic distribution in a tangent plane. After the area preserving mapping the resulting distribution would be partially monotonic.
\end{proposition}

\begin{proposition}
Let a unimodal distribution in a tangent plane. After the area preserving mapping the resulting distribution would be unimodal if the curvature of the surface is constant.
\end{proposition}

The next two propositions are relate the distances of two points in the distribution after applying the map in the two spaces, $\colon E^2 \rightarrow \HH^2$ and $\colon E^2 \rightarrow \colon S^2$ respectively:

\begin{proposition}
The mapping $ \phi_{\mu_0 \rightarrow p} \circ L^K_{p_0}(x) $, where $K$ is the curvature, depends on the hyperbolic geometry and converges to $x + p_0$ as $K \rightarrow 0$. For all $x$ in the hyperboloid $\HH^n_K$ and $x \in T_{p_0} M$ , it holds that
\begin{align*}
lim_{K\rightarrow 0} L^K_{p_0}(x)=L_{p_0}(x)=x+p_0
\end{align*}
, hence the area preserving map converges to its Euclidean variant.\\
\end{proposition}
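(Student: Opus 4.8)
The plan is to treat the claim as an elementary asymptotic computation, reducing everything to the small-argument expansions of $\sinh$ and $\cosh$. First I would note that the displayed limit involves only the Lambert map $L^K_{p_0}$, so it suffices to take $p=p_0$, in which case the relocating isometry $\phi_{p_0\to p}$ of \eqref{eq:isom_hyperboloid} is the identity; the composed statement then follows from the observation that, in the flat limit, this isometry degenerates into a Euclidean rigid motion, which commutes with the pointwise limit. Writing $r=\|x\|$ and $S=1/\sqrt{K}$, the essential structural point is that $L^K_{p_0}$ in \eqref{eq:lambert_hyperboloid} depends on $x$ only through the radius $r$ and the unit direction $x/r$; the computation is therefore purely radial, and the ambient dimension $n$ plays no role beyond carrying the direction $x/r$.

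Next I would carry out the expansion. As $K\to 0$ we have $r/(2S)=r\sqrt{K}/2\to 0$, so $\sinh\bigl(r/(2S)\bigr)=r\sqrt{K}/2+O(K^{3/2})$, whence $2\sinh\bigl(r/(2S)\bigr)=r\sqrt{K}+O(K^{3/2})$ and a second application of $\sinh$ gives $\sinh\bigl(2\sinh(r/(2S))\bigr)=r\sqrt{K}+O(K^{3/2})$. The radial amplitude is this quantity times a prefactor of the order of the radius $R=1/\sqrt{K}$, so the amplitude tends to $r$; dividing by $r$ and multiplying by $x_i$ shows that each horizontal coordinate converges to $x_i$. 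For the vertical coordinate I would instead use $\cosh\bigl(2\sinh(r/(2S))\bigr)=1+O(K)$, so that coordinate equals $R+O(\sqrt{K})$, which is precisely the last coordinate $R$ of $p_0=(0,\dots,0,R)$ up to a vanishing correction. Collecting coordinates yields $L^K_{p_0}(x)-p_0\to(x_1,\dots,x_n,0)=x$, i.e.\ $\lim_{K\to 0}L^K_{p_0}(x)=x+p_0$ as claimed.

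The one genuinely delicate local issue is the origin $x=0$, where the factors $x_i/r$ are an indeterminate $0/0$. I would handle this by observing that the radial amplitude divided by $r$, namely $R\,\sinh\bigl(2\sinh(r/(2S))\bigr)/r$, has a finite limit as $r\to 0$ (equal to $1$ in the flat regime), so $L^K_{p_0}$ extends continuously across the origin with $L^K_{p_0}(0)=p_0$, consistent with the defining property $(0,\dots,0)\mapsto p_0$ of the area-preserving construction. Since for $x$ in any bounded set the bound $r\sqrt{K}/2\to 0$ holds uniformly, all the expansions above are uniform there, so the convergence is in fact locally uniform rather than merely pointwise.

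The main obstacle I anticipate is the bookkeeping of the competing scalings: the radial prefactor diverges like $1/\sqrt{K}$ while the nested hyperbolic argument vanishes like $\sqrt{K}$, and one must verify both that their product converges to the finite Euclidean value $r$ and that the divergent part of the vertical coordinate is exactly the offset $R$ carried by $p_0$. Once this single cancellation is made precise, the remainder is a routine two-term Taylor estimate.
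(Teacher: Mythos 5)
Your proof is correct, and it takes a genuinely different route from the paper's. The paper gives no computation at all: its proof is a single sentence deferring to \cite{skopek2019mixed}, claiming the argument there is ``identical'' once $f=\exp_{\mu}\circ\PT_{\mu_0\to\mu}$ is replaced by $f=\mathrm{ISO}_{\mu\to\mu_0}\circ\exp_{\mu_0}$ (for each sign of $K$). That citation, note, is phrased for the \emph{exponential} map, whereas the proposition is about the Lambert map $L^K_{p_0}$; your direct small-argument expansion of the formula \eqref{eq:lambert_hyperboloid} is therefore better matched to the actual statement than the paper's own proof, and it additionally settles the continuous extension at $x=0$ and the locally uniform nature of the convergence, neither of which the paper addresses. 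What the paper's approach buys is brevity and coverage of both curvature signs by reference. Two caveats on your write-up. First, you silently repair the paper's notation: as printed, \eqref{eq:lambert_hyperboloid} carries radial prefactor $K$, with which the horizontal coordinates would tend to $0$ and the claim would fail; your reading of the prefactor as the radius $1/\sqrt{K}$ (consistent with the appendix derivation, where the prefactor is the hyperboloid radius) is the geometrically correct one that makes the proposition true, but you should say explicitly that this is a correction of the printed formula rather than the formula itself. Second, your disposal of the isometry $\varphi_{p_0\to p}$ by setting $p=p_0$ is fine for the displayed limit, but the supporting remark that the isometry ``degenerates into a Euclidean rigid motion'' as $K\to 0$ is itself a limit statement (with $p$ necessarily varying with $K$, since $p\in\HH^n_K$) that you assert rather than prove; since the displayed equation involves only $L^K_{p_0}$, this is harmless here, and the paper's citation-proof does not treat it either.
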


Wrapped manifold probability distributions result from the "wrapping" of the normal distribution around a manifold. The advantage of those distributions, as described rigorously above, is that sampling from Euclidean space, wrapping provides easy computation. Moreover, here, the objective is to provide proposistions, concerning monotonicity, unimodality and symmetry among those distributions in spaces where either isometries or symmetries exist.\\

\begin{proposition}
The mapping $\phi_{p_0 \rightarrow p} \circ L^K_{p_0}(x)$, where $K$ is the curvature, depends on the spherical geometry and converges to $x + \mu_0$ as $K \rightarrow 0$. For all $x$ in the Sphere $\colon S^n_K$ and $x \in T_{\mu_0}M$ , it holds that:
\begin{align*}
\lim_{K\rightarrow 0} L^K_{p_0}(x)=F_{p_0}(x)=x+p_0
\end{align*}
hence the area preserving map converges to its Euclidean variant.\\
\end{proposition}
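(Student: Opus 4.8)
The plan is to substitute the explicit curvature-dependent Lambert projection for the round sphere and compute the $K\to 0$ limit coordinatewise, exactly paralleling the hyperbolic case of the preceding proposition with $\sinh,\cosh$ replaced by $\sin,\cos$. Writing $S^n_K$ as the round sphere of radius $R=1/\sqrt{K}$, the equal-area projection $L^K_{p_0}\colon T_{p_0}S^n_K\cong\R^n\to S^n_K$ based at $p_0$ sends a tangent vector $x$ with $r=\|x\|$ to the point determined by the equal-area condition $r=2R\sin(\theta/2)$, which in ambient coordinates (generalising \eqref{eq:lambert_sphere}) reads
\begin{align*}
L^K_{p_0}(x) = \left( \sqrt{1 - \tfrac{K}{4}\|x\|^2}\, x,\ \tfrac{\sqrt{K}}{2}\|x\|^2 - \tfrac{1}{\sqrt{K}} \right),
\end{align*}
where the first block is the tangential part and the last entry is the component along the axis through $p_0$. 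I would first record this formula, check that $L^K_{p_0}(0)=p_0$, and recall that it is area-preserving for each fixed $K>0$ as established in Section \ref{sec:wrapped_distn_sphere}.

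The second step is the asymptotic expansion. As $K\to 0$ the radial factor satisfies $\sqrt{1-\tfrac{K}{4}\|x\|^2}=1-\tfrac{K}{8}\|x\|^2+O(K^2)\to 1$, so the tangential block converges to $x$. The axial component is $-\tfrac{1}{\sqrt{K}}+\tfrac{\sqrt{K}}{2}\|x\|^2$, whose deviation from the axial coordinate $-1/\sqrt{K}$ of $p_0$ is $\tfrac{\sqrt{K}}{2}\|x\|^2=O(\sqrt{K})\to 0$. Hence, measured relative to $p_0$, the image converges to the affine tangent hyperplane and the map tends to the translation $x\mapsto x+p_0$. I would then incorporate the isometry $\phi_{p_0\to p}$: since it is realised by an orthogonal matrix of the ambient space (cf.\ \eqref{eq:isom_hyperboloid}) which sends $p_0$ to $p$ and acts isometrically on the tangent frame, it commutes with the limiting translation and carries it to the corresponding translation based at $p$, leaving the conclusion $F_{p_0}(x)=x+p_0$ unchanged.

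The main obstacle is conceptual rather than computational: as $K\to 0$ both the radius $R=1/\sqrt{K}$ and the base point $p_0$ recede to infinity inside the fixed ambient $\R^{n+1}$, so the naive coordinatewise limit of $L^K_{p_0}$ does not converge there. The content of the statement is that the limit must be taken in a frame recentred at $p_0$ (equivalently, in Riemannian normal coordinates at $p_0$), in which the equal-area map degenerates to the identity translation. I would make this precise by subtracting the axial coordinate of $p_0$ before passing to the limit and by verifying that the $O(K)$ and $O(\sqrt{K})$ error terms are uniform on compact sets of $x$, yielding locally uniform rather than merely pointwise convergence. The remaining routine verification is that the differential $dL^K_{p_0}$ likewise tends to the identity, so that the pushed-forward wrapped normal converges to the Euclidean Gaussian and the area-preserving map recovers its Euclidean variant.
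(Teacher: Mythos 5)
Your proof is correct in substance, but it follows a genuinely different route from the paper's. The paper disposes of this proposition in a single line: it declares the proof ``identical with \cite{skopek2019mixed},'' replacing the composition $\exp_{\mu}\circ \mathrm{PT}_{\mu_0\rightarrow\mu}$ used there by an isometry composed with $\exp_{\mu_0}$ --- that is, it defers entirely to Skopek et al.'s curvature-limit argument for the \emph{exponential-map} wrapped normal. This deferral is in fact loose, since the cited argument concerns the exponential map rather than the equal-area Lambert projection $L^K_{p_0}$ that the proposition is actually about (the paper's proof sketch even retains $\exp_{\mu_0}$ in its formula). Your argument, by contrast, is self-contained: you insert the explicit Lambert formula for the sphere of radius $1/\sqrt{K}$, expand coordinatewise, and obtain $L^K_{p_0}(x)-(x+p_0)\to 0$ with explicit $O(K)$ tangential and $O(\sqrt{K})$ axial error terms, uniformly on compact sets. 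You also isolate the genuine subtlety that the paper's one-line proof glosses over: the limit cannot be taken naively in the fixed ambient $\R^{n+1}$, because $p_0=(0,\dots,0,-1/\sqrt{K})$ itself recedes to infinity, so the statement only makes sense after recentring at $p_0$, i.e.\ as $L^K_{p_0}(x)-(x+p_0)\to 0$. What the paper's approach buys is brevity and uniformity with the hyperbolic case; what yours buys is an actual proof for the Lambert map, quantitative rates, locally uniform convergence, and (via convergence of the differential $dL^K_{p_0}$ to the identity) convergence of the pushed-forward wrapped normal to the Euclidean Gaussian. The one point to tighten is the isometry step: the matrix realising $\phi_{p_0\rightarrow p}$ depends on $K$ (both $p_0$ and $p$ move with the sphere), so the assertion that it ``commutes with the limiting translation'' needs a short justification, for instance by fixing $p$ at constant geodesic distance from $p_0$ and checking that the corresponding orthogonal matrices converge as $K\to 0$.
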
\label{prop:proposition15}  

\begin{remark}

In order to achieve bijectivity the mapping should be a diffeomorphism. For $K$ non-positive Cartan-Hadamard is used. In case $K$ is positive the distribution might be rescaled or even truncated in order to be wrapped only once around the manifold.
\end{remark}\label{rem:remark1}

We compare the Wrapped Normal with the Von Mises and the Inverse Stereographic Normal \cite{selvitella2019geometric}. A classical
argument to promote the use of the Von Mises as a natural circular counterpart of the Normal Distribution is that, in the case of high-concentration limit ($\kappa \rightarrow \infty$), the two distributions resemble each others

\begin{equation*}
f_{VM}(z\mid \mu ,\kappa )\approx {\frac {1}{\sigma {\sqrt {2\pi }}}}\exp \left[{\dfrac {-(z-p )^{2}}{2\sigma ^{2}}}\right]
\end{equation*}

where $\sigma^2 = \frac{1}{\kappa}$  and the difference between the left hand side and the right hand side of the approximation converges uniformly to zero as $\kappa$  goes to infinity. Because of the theorems 3.1, 3.2 and 3.3 in \cite{selvitella2019geometric} Wrapped Normal has the same asymptotic behavior as those three distributions when the respective parameters go to infinity. Indicatively:

\begin{proposition}

Consider the two distributions
\begin{equation*}
f_{VM}(z \mid 0, \kappa) = \frac{e^{\kappa cos(z)}}{
2\pi I_0(\kappa)} 
\end{equation*}

\begin{equation*}
  f_{WN}\left( z | 0, \sigma \right) = \dfrac{1}{ (2 \pi)^{d/2} | \sigma |^{-1/2} } \exp \left(-\frac {1}{2} u^T \sigma^{-1} u \right),
\end{equation*}
,
with $0< z << \R$ and $\sigma^2 = \frac{1}{\kappa}$. Then
\begin{equation*}
\mid \mid f_{VM}(z \mid 0, \kappa) - f_{WN} (z | 0, \sigma^2) \mid \mid_{L^{\infty}([-\pi,+\pi))} \rightarrow 0, \text{ as } \kappa \rightarrow +\infty.
\end{equation*}

\end{proposition}\label{prop:proposition17}

For future work, similar asymptotics propositions involving Reimannian Normal distribution (i.e. from \cite{vonlooz2015}) and Wrapped Normal distribution, since it follows a Normal distribution after the mapping to euclidean plane, can be extracted.

\section{Experimental Study}
\label{sec:experiments}

\subsection{Network dataset}

Like in \cite{Papamichalis2021}, we illustrate these techniques using examples from an alleged benchmark dataset. The results we provide support our initial motives, which were to use inferencial models for network data. Here, the main feature of the model is that the latent positions for the nodes lie in a non-Euclidean geometry. The benchmark example is the Florentine's Family dataset, for which we assume a spherical geometry. To fit the model, we use a Metropolis-Hastings Markov Chain Monte Carlo that has as proposal a random walk in the sphere. As one of the summaries, we provide a log-likelihood convergence plot. We ran 100000 iterations of the algorithm in approximately 18 mins for each case, in R version 3.6.3. Smacof package is used for initialization, which as shown in figure \ref{fig:MCMC} helps in the convergence. Results and time (figure \ref{fig:MCMC} and table \ref{tab:alpha}) are very close to what was obtained in \cite{Papamichalis2021}.\\


\begin{figure}[H]
  \centering
  \includegraphics[width=1 \textwidth]{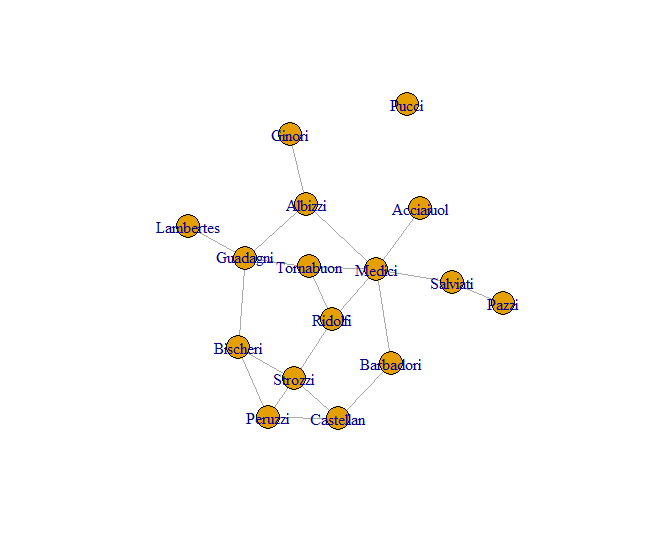}
  \caption{Network of Florentine Family with 15 non-isolated nodes. Source: igraph and netrankr, R packages}
\end{figure}

\begin{figure}[H]
  \centering
  \includegraphics[width=1 \textwidth]{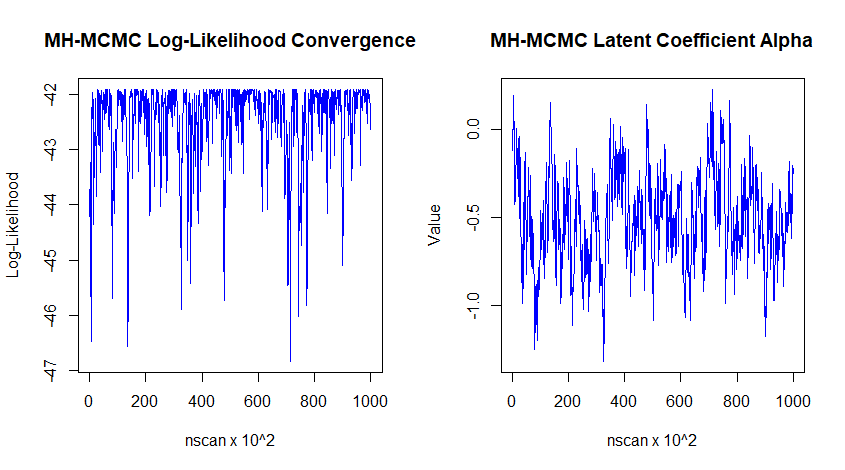}
  \caption{MH-MCMC of log-posterior for algorithm 2 convergence of \cite{Papamichalis2021}, with 15 nodes of Florentine family with area preserving Wrapped Normal distribution, for spherical geometry. For transparency a thinned version of 100 equidistant samples, among 10000, is presented. }
\end{figure}\label{fig:MCMC}

\begin{table}
\centering
\fbox{%
\begin{tabular}{| l  l  l|}
\hline
Dataset  & Nodes & MH-MCMC Estimation
 \\
\hline            
Florentine family &15   & $a=-0.587$ \\
\hline            
\end{tabular}}
\caption{Estimation of the $a$ parameter for Florentine family, which is described through spherical geometry.}
\end{table}\label{tab:alpha}

In similar manner, by using a random walk in the surface of the hyperboloid, we can compare, the results of Karate Club, from \cite{Papamichalis2021} where the Poincar\'{e} disk is used to discribe the hyperbolic space instead of the hyperboloid.

\subsection{Variational Autoencoders}

The idea of autoencoders has been part of the historical landscape of neural networks for decades. Traditionally, autoencoders were used for dimensionality reduction or feature learning. Recently, theoretical connections between autoencoders andlatent variable models have brought autoencoders to the forefront of generative modeling. Autoencoders may be thought of as being a special case of feedforward networks and may be trained with all the same techniques, typically minibatch gradient descent following gradients computedby back-propagation. \\

Like autoencoders, variational autoencoders learn the parameters of a probability distribution representing the data. Since it learns to model the data, we can sample from the distribution and generate new input data samples. The goal of the variational autoencoder (VAE) is to learn a probability distribution $Pr(x)$ over a multi-dimensional variable $x$. There are two main reasons for modelling distributions. First, we might want to draw samples (generate) from the distribution to create new plausible values of $x$. Second, we might want to measure the likelihood that a new vector $x^{*}$ was created by this probability distribution. In fact, it turns out that the variational autoencoder is well-suited to the former task but not for the latter.\\

Smooth manifolds contain geometries that is known to be well-suited for representation learning of data with an underlying hierarchical structure. Among others, examples include: Hyperbolic, Euclidean and Sphrerical geometries in 2-manifold setting and the 8 geometries in 3-manifolds \cite{novello2020see}. In \cite{davidson2018hyperspherical}, the authors address this issue and propose a von Mises-Fisher (vMF) distribution instead, leading to a hyperspherical latent space, recovering hypershperical latent representations in link predictions on graphs. To this end, we propose an extentions of the theory of \cite{nagano2019wrapped}, where a wrapped hyperbolic normal distribution is constructed and examples on Synthetic Binary Tree, Atari 2600 Breakout, Word Embeddings and MNIST are presented.\\ 

The MNIST database (Modified National Institute of Standards and Technology database) is a large database of handwritten digits that is commonly used for training various image processing systems. The database is also widely used for training and testing in the field of machine learning and is used both in \cite{nagano2019wrapped,davidson2018hyperspherical}. The MNIST database contains 60,000 training images and 10,000 testing images. Half of the training set and half of the test set were taken from MNIST's training dataset, while the other half of the training set and the other half of the test set were taken from MNIST's testing dataset. \\

\begin{table}
\centering
\fbox{%
\begin{tabular}{| l  l  |}
\hline
Variational Auto-encoder & Abbreviation \\
\hline            
Normal Variational Auto-Encoder & N-SAE \\
\hline
Hyperspherical Variational Auto-Encoder &   S-VAE \\
\hline
Area Preserving Hyperspherical Variational Auto-Encoder  &   A-S-VAE \\
\hline  
 Hyperbolic Variational Auto-Encoder   &   H-VAE  \\
\hline 
Area Preserving Hyperbolic Variational Auto-Encoder    &  A-H-VAE \\
\hline    
\end{tabular}}
\caption{Abbreviations of Normal Auto-encoder, Hyperspherical Variational Auto-Encoder, Hyperbolic Variational Auto-Encoder and our methods on Spheres and Hyperbolic space, Area Preserving Hyperspherical Variational Auto-Encoder and Area Preserving Hyperbolic Variational Auto-Encoder.}
\end{table}

\begin{table}
\centering
\fbox{%
\begin{tabular}{| l  l  l   l l l|}
\hline
Dimension & N-VAE & S-VAE & A-S-VAE & H-VAE & A-H-VAE\\
\hline            
d = 2 & -135.73$\pm$.83 &-132.50$\pm$.73  & -133.76$\pm$.29&-138.61$\pm$.0.45&-137.48$\pm$.0.32\\
\hline
d = 5  &   -110.21$\pm$.21  & -108.43$\pm$.09 & -108.97$\pm$.48 &-105.38$\pm$.0.61&-105.83$\pm$.0.35\\
\hline
d = 10   &   -93.84$\pm$.30  & -93.16$\pm$.31&-93.89$\pm$.43 &-86.40$\pm$.0.28&-87.04$\pm$.0.21 \\
\hline  
d = 20   &   -88.90$\pm$.26 &  -89.02$\pm$.31 &-89.09$\pm$.87 &-79.23$\pm$.0.20&-78.98$\pm$.0.39 \\
\hline 
d = 40   &  -88.93$\pm$.30 & -90.87$\pm$.34& -90.92$\pm$.64&-78.23$\pm$.0.20&-78.89$\pm$.0.41 \\
\hline    
\end{tabular}}
\caption{Quantitative comparison of Hyperbolic VAE
against Vanilla VAE on the MNIST dataset in terms of loglikelihood (LL) for several values of latent space dimension $n$. LL was computed using 500 samples of latent variables.
We calculated the mean and the $\pm$ 1 SD with five different
experiments}
\end{table}

The LL is estimated using
importance sampling with 500 sample points \cite{burda2015importance}. The difference between the spherical case can be justified due to prior. Likewise, very easily can be realized that the results of \cite{nagano2019wrapped} are very close to our results.\\

In terms of log-likelihood (LL) all methods clearly outperformed the N-VAE in low dimensions and performs comparable to the N-VAE in higher dimensions. Empirically this shows that
the positive effect of having a uniform prior in
low dimensions both in N-VAE, S-VAE and H-VAE. In higher dimensions the spaces tend to embed all the information missing, which is captures by the nature of the autoencoder in lower dimensions.\\

We observe, that both N-VAE and S-VAE are sensitive to priors. Specifically, from \cite{davidson2018hyperspherical}, in the N-VAE setting it is observed that, the prior is too strong it will force the
posterior to match the prior shape, concentrating the samples in the center. However, this prevents the N-VAE to
correctly represent the true shape of the data and creates
instability problems for the decoder around the origin. However, as the approximate posterior differs
strongly from the prior, obtaining meaningful samples
from the latent space again becomes problematic. The S-VAE on the other hand, almost perfectly recovers
the original dataset structure, while the samples from the
approximate posterior closely match the prior distribution. However, even though the Von-Mises distribution is more informative describing the data, it is very sensitive to the prior, as well. \\

Our method A-H-VAE, either outperformed Normal and Hypersphere cases and are very close to H-VAE. with small latent dimension. As we realize, in larger dimensions the deviations between all cases decrease. In is worth menthioning that both A-S-VAE and A-H-VAE are very close to S-VAE and H-VAE, respectively. However, A-H-VAE and H-VAE outperforms all the methods due to the specific the example. The samples are described best, in low dimensions from H-VAE and A-H-VAE. The two approaches are so close that the differences that occur between are caused due to noise.

\section{Discussion}
\label{sec:discussion}

In this paper, we present a novel general framework to construct  distribution for smooth manifolds called manifold wrapped distribution, a wrapped normal distribution on smooth manifold space whose density can be evaluated analytically and differentiated with respect to the parameters. This is important because even though the default choice
of a Gaussian distribution for both the prior
and posterior represents a mathematically convenient distribution often leading to competitive results, this parameterization
fails to model data with a latent structure. Our distributions enables the gradient-based learning of the probabilistic models on smooth manifold spaces that could never have been considered before. Here, we do not intend to find the underlying
geometry of network data but instead our goal is to derive Gaussian-like distributions, by using a measure preserving maps, which describe property datasets and have useful analytical properties.  \\

Our motivation is twofold. First, the usefulness of our approach could be found in a variety of statistical and deep learning settings, such as network data and variational autoencoders. Both of them, are based in latent representations in order to describe the mechanisms that created the data. Intuitevely, properties of smooth manifolds provide a natural way to deal with those representations. Thus, a consistent and universal framework to derive distributions in those manifolds is constructed and provided to practitioners. Those distributions, could be constructed in such a way that are suitable to describe complex data sets, which euclidean space lacks to do so and at the same reduce the complexity of the calculations. Secondly, we pose several question for further investigation. The answer to those open question will help the connection between topology and statistical learning to grow. \\

For future research, we point out several important topics. Under which circumstances symmetric spaces preserve symmetries and unimodality after mapping distributions through spaces? Generally, which properties of the distribution are preserved? Which properties of the mapping can we control in order to get a distribution that we want? One idea would be to get a map given by optimizing some properties. What are good maps in terms of distributions? How to uniquely determine a dimension represent latent networks? By this, we mean a model selection criterion which keeps a balance between the number of dimensions and the information of the network. Is there a way to characterise how much the initial distribution is distorted by the exponential map, an area preserving map or generally a diffeomorphism? Is it possible to construct such frameworks for distributions on non-smooth manifold and orbifolds (or quotient spaces of Seifert $n$-manifolds? The flexibility that is provided due to their properties could be generalized by taking into account more complex objects, such as different manifolds or orbifolds in $n$-dimentional spaces, for describing more complex data. 

\printbibliography

\section*{Appendix}

\subsection{Derivation of Lambert map}

\subsubsection{On the sphere}
We consider the Lambert map defined between $S^3$ and $\mathbb{R}^2$. Let $(r, \theta)$ and $(X,Y)$ denote the polar and cartesian coordinates of $\mathbb{R}^2$, respectively. We also let $(\tilde{\phi}, \tilde{\theta})$ and $(x,y,z)$ denote the polar and cartesian coordinates of $S^3$, respectively. \\ 

In polar coordinates, the Lambert map from the plane to the sphere (see \cite{borradaile2003}) is given by     
\begin{align}                                                  
  (r, \theta)  &=  (2K \sin (\tilde{\phi}/2), \tilde{\theta}) \\    
  (\tilde{\phi}, \tilde{\theta}) &= ( 2 \sin^{-1} (r/2K), \theta)   
\end{align}                                                    
The cartesian coordinates for the plane can be written as     
\begin{align}  
  X = r \cos \theta, \hspace{1cm} Y = r \sin \theta 
\end{align}  
and for the sphere 
\begin{align}                                                  
  x = K \cos \tilde{\theta} \sin \tilde{\phi}, \hspace{.5cm} y = K \sin \tilde{\theta} \sin \tilde{\phi}, \hspace{.5cm} z = - K \cos \tilde{\phi} 
\end{align}                                                    
To derive expressions for $(x,y,z)$ in terms of $(X,Y)$ we require expressions for $\sin \tilde{\phi}, \cos \tilde{\phi}, \sin \tilde{\theta}$ and $\cos \tilde{\theta}$. 
\begin{align}                         
  X^2 + Y^2 &= 4 K^2 \sin^2 ( \tilde{\phi}/2 ) = 4 K^2 \left( \dfrac{1 - \cos \phi}{2} \right) = 2 K^2( 1- \cos \tilde{\phi}) \\                                     
  \Rightarrow - \cos \tilde{\phi} &= \dfrac{X^2 + Y^2}{2 K^2} - 1 \\               
  \sin \tilde{\phi} &= \sqrt{1 - \cos^2 \tilde{\phi} } = \sqrt{1 - \left[ \dfrac{X\
^2 + Y^2}{2 K^2} - 1 \right]^2} = \sqrt{ - \left[\dfrac{X^2 + Y^2}{2 K^2}\right]^2\
 + \dfrac{X^2 + Y^2}{K^2}  }  \\                               
            &= \sqrt{ \dfrac{X^2 + Y^2}{K^2} \left( 1 - \dfrac{X^2 + Y^2}{4 K^2} \right) }  = \sqrt{\dfrac{X^2 + Y^2}{K^2}} \sqrt{1 - \dfrac{X^2 + Y^2}{4 K^2} } \\  
  \cos \tilde{\theta} &= \cos{\theta} = \dfrac{X}{r} = \dfrac{X}{\sqrt{X^2 + Y^2}} \\          
  \sin \tilde{\theta} &= \sin{\theta} = \dfrac{Y}{r} = \dfrac{Y}{\sqrt{X^2 + Y^2}}                    
\end{align}         
This gives          
\begin{align}     
  x &= K \dfrac{X}{\sqrt{X^2 + Y^2}} \sqrt{\dfrac{X^2 + Y^2}{K^2}} \sqrt{1 - \dfrac{X^2 + Y^2}{4 K^2} }  = X \sqrt{1 - \dfrac{X^2 + Y^2}{4 K^2} } \\ 
  y &= Y \sqrt{1 - \dfrac{X^2 + Y^2}{4 K^2} } \\       
  z &= K \left( \dfrac{X^2 + Y^2}{2 K^2} - 1  \right) = \dfrac{X^2 + Y^2}{2 K} - K 
\end{align} 

\subsubsection{On the hyperboloid}

We can also define the Lambert map on the hyperboloid. Let $(X,Y)$ and $(r, \theta)$ denote cartesian and polar coordinates on $\mathbb{R}^2$, and let $(x,y,z)$ and $(\tilde{R}, \tilde{\theta})$ denote cartesian and polar coordinates on $\mathbb{H}^2$. \\

We have           
\begin{align}
  X = r \cos \theta, \hspace{1cm} Y = r \sin \theta  
\end{align}
and  
\begin{align} 
  x = K \sinh \tilde{R} \cos \tilde{\theta}, \hspace{.5cm} y = K \sinh \tilde{R} \sin \tilde{\theta}, \hspace{.5cm} z = K \cosh \tilde{R}  
\end{align} 

The Lambert map is given by  
\begin{align}  
  (r, \theta) &= (2S \sinh^{-1} (\tilde{R}/2), \tilde{\theta}) \\ 
  (\tilde{R}, \tilde{\theta} ) &= \left( 2 \sinh (r / 2S), \theta \right) 
\end{align} 
where $S = \dfrac{1}{\sqrt{K}}$. \\

We derive expressions for $(x,y,z)$ in terms of $(X,Y)$. We need expressions for $\sin \tilde{\theta}, \cos \tilde{\theta}, \cosh \tilde{R}$ and $\sinh \tilde{R}$.  
\begin{align} 
  \sqrt{X^2 + Y^2} &= r = 2S \sinh^{-1} (\tilde{R}/2) \Rightarrow \tilde{R} = 2 \sinh \left( \dfrac{ \sqrt{X^2 + Y^2}}{2S} \right)\\ 
  \cos \theta &= \cos \tilde{\theta} = \dfrac{X}{ \sqrt{X^2 + Y^2} } \\  
  \sin \theta &= \sin \tilde{\theta} = \dfrac{Y}{ \sqrt{X^2 + Y^2} }  
\end{align}
This gives 
\begin{align} 
  x &= K \sinh \left( 2 \sinh \left( \dfrac{ \sqrt{X^2 + Y^2}}{2S} \right) \right) \dfrac{X}{ \sqrt{X^2 + Y^2} } \\
  y &= K \sinh \left( 2 \sinh \left( \dfrac{ \sqrt{X^2 + Y^2}}{2S} \right) \right) \dfrac{Y}{ \sqrt{X^2 + Y^2} } \\   
  z &= K \cosh \left( 2 \sinh \left( \dfrac{ \sqrt{X^2 + Y^2}}{2S} \right) \right) 
\end{align} 

\subsection*{Determinant of Jacobian}

The Lambert maps in hyperbolic and spherical spaces has the determinant of the Jacobian equal to 1. They are area preserving. For both Hyperbolic and Spherical case we have:

\begin{align*}
\det A &= 
\begin{vmatrix}
\frac{\partial X}{\partial x} & \frac{\partial Y}{\partial x} \\ 
\frac{\partial X}{\partial y} & \frac{\partial Y}{\partial y} \notag 
\end{vmatrix}
=\begin{vmatrix}
\sqrt{\frac{2}{1+z}} & 0 \\ 
0 & \sqrt{\frac{2}{1+z}}  \notag 
\end{vmatrix}
=1
\end{align*}

, since $z$=1.

\subsection*{Exponential map and Parallel transport}

The distributions of \cite{nagano2019wrapped,skopek2019mixed} are symmetric regarding their spaces' metrics but in terms of calculations, using the euclidean metric, they lack symmetry. Their parameters are no longer the mean and the variance. Our construction allows symmetry and unimodality of the distribution in euclidean metric, so, oversimplifies the calculations and allow for analytical results both achieving symmetry in euclidean metric and the corresponding space's metric.

\subsection*{Theory}

\begin{lemma}
  Let $M$ be an $n$-dimensional complete Riemannian manifold. Fix $p \in M$ and suppose that $\exp_{p} 	\vdots T_p M \rightarrow M$ is a diffeomorphism. Let $\mu$ be a probability measure on $T_p M \cong \R^n$ and let $\nu = (\exp_p )_{\#} \mu$ be the push-forward measure on $M$. The same happens with any diffeomorphism instead of exponential map, like area preserving diffeomorphism.  \\

  Let $f \vdots M \rightarrow M$ be an isometry of $M$ fixing $p$ so that $f(p) = p$. \\

  If $d f_p \vdots T_p M \rightarrow T_p M$ preserves the measure $\mu$ so that $( df_p)_{\#} \mu = \mu$, then the isometry $f$ preserves the measure $\nu$ so that $f_{\#} \nu = \nu$.
\label{lemma:measure_pres}  
\end{lemma}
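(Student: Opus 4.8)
The plan is to reduce the measure-theoretic claim to a single geometric fact, the \emph{naturality of the exponential map under isometries}, and then to conclude by a purely formal manipulation of push-forwards. Concretely, I would first establish that, because $f$ is an isometry fixing $p$, it intertwines the exponential map with its own differential at $p$, namely
\[
f \circ \exp_p = \exp_{f(p)} \circ\, df_p = \exp_p \circ\, df_p,
\]
where the second equality uses $f(p) = p$. Granting this identity, the conclusion follows from the functoriality of the push-forward.

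To prove the commutation relation, I would use that isometries carry geodesics to geodesics. Fix $v \in T_pM$ and let $\gamma_v$ be the unique maximal geodesic with $\gamma_v(0) = p$ and $\gamma_v'(0) = v$. Since $f$ is an isometry, it preserves the Levi--Civita connection, so $f \circ \gamma_v$ is again a geodesic; its initial point is $f(\gamma_v(0)) = f(p) = p$ and its initial velocity is $df_p(v)$. By uniqueness of geodesics with prescribed initial data, $f \circ \gamma_v = \gamma_{df_p(v)}$, and evaluating at $t = 1$ gives $f(\exp_p(v)) = \exp_p(df_p(v))$, which is the desired identity. Completeness guarantees that $\exp_p$ is defined on all of $T_pM$, while the hypothesis that $\exp_p$ is a diffeomorphism is used only to identify $\nu$ as a genuine push-forward and plays no role in the commutation itself.

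With the equivariance in hand, I would compute, using the composition rule $(g \circ h)_{\#} = g_{\#} \circ h_{\#}$ for push-forwards,
\[
f_{\#}\nu = f_{\#}\big((\exp_p)_{\#}\mu\big) = (f \circ \exp_p)_{\#}\mu = (\exp_p \circ\, df_p)_{\#}\mu = (\exp_p)_{\#}\big((df_p)_{\#}\mu\big) = (\exp_p)_{\#}\mu = \nu,
\]
where the penultimate equality is exactly the hypothesis $(df_p)_{\#}\mu = \mu$. This finishes the argument.

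The only genuine content is the naturality identity; everything else is formal bookkeeping. I expect the main obstacle to be stating the geodesic-preservation step at the right level of rigor, together with one observation concerning the generalization flagged in the statement (``any diffeomorphism instead of the exponential map''): the argument does not go through automatically for a generic wrapping diffeomorphism $F_p$, since for arbitrary $F_p$ in place of $\exp_p$ one needs the equivariance $f \circ F_p = F_p \circ df_p$ as an \emph{additional} hypothesis. For $F_p = \exp_p$ this equivariance holds for free precisely because isometries preserve geodesics, which is what makes the exponential map the natural choice and keeps the proof self-contained.
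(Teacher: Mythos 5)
Your proof is correct and takes essentially the same route as the paper's: both hinge on the equivariance identity $f \circ \exp_p = \exp_p \circ\, df_p$ and then conclude by a formal push-forward computation (the paper unrolls the functoriality $(g \circ h)_{\#} = g_{\#} \circ h_{\#}$ set-by-set via inverse images, which is the same argument in disguise). If anything, your write-up is more complete: the paper merely asserts the commutative diagram, whereas you derive it from geodesic-preservation and uniqueness of geodesics, and your closing caveat --- that an arbitrary wrapping diffeomorphism $F_p$ requires the equivariance $f \circ F_p = F_p \circ\, df_p$ as an additional hypothesis --- correctly identifies a gap in the paper's ``the same happens with any diffeomorphism'' clause.
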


\begin{proof}
  Since $f	\vdots M \rightarrow M$ is an isometry with $f(p) = p$, $d f_p \vdots T_p M \rightarrow T_p M$ is an orthogonal transformation of $T_p M$. In other words, $df_p$ is a linear isometry of $T_p M \cong \R^n$ with the inner product $g_p$ determined by the Riemannian metric. \\

  We have the following commutative diagram
  \begin{figure}[H]
  \centering
\begin{tikzpicture}
  \matrix (m) [matrix of math nodes,row sep=3em,column sep=4em,minimum width=2em]
  {
    T_p M & T_p M \\
    M & {M} \\};
  \path[-stealth]
    (m-1-1) edge node [left] {$\exp_p$} (m-2-1)
            edge node [above] {$d f_p$} (m-1-2)
            (m-2-1.east|-m-2-2) edge node [below] {$f$}
    node [above] {} (m-2-2)
    (m-1-2) edge node [right] {$\exp_p$} (m-2-2);
  \end{tikzpicture}
  \caption{Diagram of commutative relationships}. \label{fig:comm}
\end{figure}

Then we have
\begin{align*}
  f \circ \exp_p = \exp_p \circ \vdots df_p 
\end{align*}\label{eq:f_comp_exp}

Since all of the functions in Figure \ref{fig:comm} are diffeomorphisms, it follows from \eqref{eq:f_comp_exp} that
\begin{align*}
  \exp_p^{-1} \circ f^{-1} = d f^{-1}_p \circ \exp_p^{-1}. 
\end{align*}\label{eq:f_comp_exp_inv}

Recall that $df_p$ is measure-preserving so that $\left( d f_p \right)_{\#} \mu = \mu$. Then, using \eqref{eq:f_comp_exp_inv}, we get
\begin{align*}
  \left( f_{\#} \nu \right) (A) &= \nu \left( f^{-1} (A) \right) && \mbox{ by push-forward definition} \\
                               &= \mu \left( \exp_p^{-1} \left( f^{-1} (A) \right) \right) && \mbox{ by definition of } \nu \\
                               &= \mu \left( df_p^{-1} \left( \exp_p^{-1} (A) \right) \right) && \mbox{ by } \eqref{eq:f_comp_exp_inv} \\
                               &= \left( (df_p)_{\#} \mu \right) \left( \exp_p^{-1} (A) \right) && \mbox{ by push-forward definition} \\
                               &= \mu \left( \exp_p^{-1} (A) \right) && \mbox{ since } df_p \mbox{ measure-preserving} \\
                               &= \nu(A) && \mbox{ by definition of } \nu
\end{align*}

Therefore, $f$ is measure-preserving. 

\end{proof}

\begin{remark}
  Lemma \ref{lemma:measure_pres} holds for any complete simply-connected Riemannian manifold $M$ with non-positive sectional curvature such as, for example, hyperbolic space. By the Cartan-Hadamard theorem, the exponential map at any point of such a manifold $M$ is a diffeomorphism.
\end{remark}

\begin{lemma}
  Let $M$ be an $n$-dimensional Riemannian manifold. Fix $p \in M$ and suppose that $\exp_{p} 	\vdots T_p M \rightarrow M$ is a diffeomorphism. Let $\mu$ be a probability measure on $T_p M \cong \R^n$ and let $\phi: \R^n \rightarrow \R^n$ be a measure preserving diffeomorphism. Define a diffeomorphism $\phi	\vdots M \rightarrow  M$ by $f	\vdots \exp_p \circ \phi \circ \exp_p^{-1}$. Then $f$ preserves the push forward measure $\nu=(\exp_p)_{\#} \mu$ on $M$. The same happens with any diffeomorphism, like an area preserving map.
\end{lemma}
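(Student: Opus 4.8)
The plan is to reduce the statement to a direct push-forward bookkeeping computation, entirely parallel to the proof of the preceding lemma, since no geometric input beyond the hypothesis that $\exp_p$ is a diffeomorphism is actually needed. First I would record that $f = \exp_p \circ \phi \circ \exp_p^{-1}$ is a well-defined diffeomorphism of $M$, being a composition of the diffeomorphisms $\exp_p$, $\phi$, and $\exp_p^{-1}$; in particular it is a measurable bijection with measurable inverse
\begin{align*}
f^{-1} = \exp_p \circ \phi^{-1} \circ \exp_p^{-1}.
\end{align*}
I note the harmless notational clash in the statement: the symbol $\phi$ denotes both the given map on $\R^n$ and the induced map on $M$; I would write the latter as $f$ throughout.

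The core step is to evaluate $(f_{\#}\nu)(A)$ for an arbitrary measurable set $A \subseteq M$ by unwinding definitions. Using $\nu = (\exp_p)_{\#}\mu$ together with the formula for $f^{-1}$, I would compute
\begin{align*}
(f_{\#}\nu)(A) &= \nu\bigl(f^{-1}(A)\bigr) \\
&= \mu\bigl(\exp_p^{-1}(f^{-1}(A))\bigr) \\
&= \mu\bigl(\phi^{-1}(\exp_p^{-1}(A))\bigr) \\
&= (\phi_{\#}\mu)\bigl(\exp_p^{-1}(A)\bigr) \\
&= \mu\bigl(\exp_p^{-1}(A)\bigr) \\
&= \nu(A),
\end{align*}
where the third line uses the identity $\exp_p^{-1}\circ f^{-1} = \phi^{-1}\circ \exp_p^{-1}$ (which follows from $f^{-1} = \exp_p \circ \phi^{-1} \circ \exp_p^{-1}$ and the cancellation $\exp_p^{-1}\circ\exp_p = \mathrm{id}$), and the fifth line uses the hypothesis that $\phi$ is measure preserving, $\phi_{\#}\mu = \mu$. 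Since $A$ was arbitrary, this yields $f_{\#}\nu = \nu$, as claimed.

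There is no substantive obstacle here: the argument is pure push-forward calculus, and the only point requiring care is that $\exp_p$ is \emph{assumed} to be a diffeomorphism. On a general Riemannian manifold the exponential map need not be injective, but invertibility is part of the hypothesis and is precisely what guarantees that $f$ is a well-defined diffeomorphism and that the cancellation $\exp_p^{-1}\circ\exp_p=\mathrm{id}$ used in the computation is legitimate. (Thus, unlike the earlier lemma, completeness and the Cartan--Hadamard theorem are not invoked; they would only be needed to \emph{verify} the diffeomorphism hypothesis in a concrete example, such as hyperbolic space.)

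Finally, the closing remark that $\exp_p$ may be replaced by an arbitrary diffeomorphism $F_p\colon T_pM \to M$—for instance an area-preserving Lambert map—requires no modification of the argument. The computation above never used any property specific to the exponential map beyond its being a diffeomorphism; substituting $F_p$ for $\exp_p$ verbatim, and setting $\nu = (F_p)_{\#}\mu$ and $f = F_p \circ \phi \circ F_p^{-1}$, reproduces the same chain of equalities and establishes the stated generalisation.
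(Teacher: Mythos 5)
Your proof is correct and follows essentially the same route as the paper's: both expand $f^{-1} = \exp_p \circ \phi^{-1} \circ \exp_p^{-1}$, unwind the push-forward definitions, cancel $\exp_p^{-1}\circ\exp_p = \mathrm{id}$, and invoke $\phi_{\#}\mu = \mu$ in the same order. Your version is in fact slightly cleaner, since you evaluate on an arbitrary measurable set $A$ rather than composing measures with maps as the paper does, and you explicitly handle the notational clash and the ``any diffeomorphism'' generalisation that the paper leaves implicit.
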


\begin{proof}

We want to show that $f_{\#}\nu=\nu$. We compute:

\begin{align*}
f_{\#}\nu  &=\nu \circ f^{-1}&& \mbox{}\\
&=\nu(\exp_p \circ \phi^{-1} \circ \exp_p^{-1}) && \mbox{}\\
&=(\exp_p)_{\#} \mu(\exp_p \circ \phi^{-1} \circ \exp_p^{-1}) && \mbox{}\\
&=\mu(\exp_p^{-1} \circ \exp_p \circ \phi^{-1} \circ \exp_p^{-1})&& \mbox{}\\
&=\mu(\phi^{-1} \circ \exp_p^{-1})&& \mbox{}\\
&=(\phi_{\#} \mu)(\exp_p^{-1})&& \mbox{}\\
&=\mu(\exp_p^{-1})&& \mbox{}\\
&=\nu && \mbox{because } \phi \mbox{ is a measure-preserving diffeomorphism.}
\end{align*}

\end{proof}

More generally, the same proof yields:

\begin{lemma}
Let $M,N$ be smooth manifolds and let $\mu$ be a probability measure on $M$. If $f	\vdots M \rightarrow N$ is a diffeomorphism and $\phi	\vdots M \rightarrow M$ is a measure-preserving diffeomorphism, then the diffeomorphism $\psi=f \circ \phi \circ f^{-1}$ preserves the push-forward measure $f_{\#} \mu$.
\end{lemma}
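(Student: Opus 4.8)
The plan is to reduce everything to the functoriality of push-forward under composition, namely $(g \circ h)_{\#} = g_{\#} \circ h_{\#}$ for measurable maps, together with the single hypothesis $\phi_{\#}\mu = \mu$. Writing $\nu = f_{\#}\mu$ for the push-forward on $N$, the goal is to show $\psi_{\#}\nu = \nu$, where $\psi = f \circ \phi \circ f^{-1}$.

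First I would compute directly:
\begin{align*}
\psi_{\#}\nu = (f \circ \phi \circ f^{-1})_{\#}(f_{\#}\mu) = f_{\#}\,\phi_{\#}\,(f^{-1})_{\#}\,f_{\#}\,\mu.
\end{align*}
Since $f$ is a diffeomorphism, $(f^{-1})_{\#}\bigl(f_{\#}\mu\bigr) = (f^{-1} \circ f)_{\#}\mu = (\mathrm{id}_M)_{\#}\mu = \mu$, so the inner two factors collapse and the expression becomes $f_{\#}\,\phi_{\#}\,\mu$. Applying the hypothesis $\phi_{\#}\mu = \mu$ then yields $f_{\#}\mu = \nu$, as desired.

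Alternatively, and to match the style of the preceding lemmas, I would argue set-theoretically. Fix a measurable set $B \subseteq N$. The key observation is that conjugation inverts cleanly, $\psi^{-1} = f \circ \phi^{-1} \circ f^{-1}$, so that $f^{-1}(\psi^{-1}(B)) = \phi^{-1}(f^{-1}(B))$ once the outer $f^{-1} \circ f$ cancels. Then
\begin{align*}
(\psi_{\#}\nu)(B) = \mu\bigl(f^{-1}(\psi^{-1}(B))\bigr) = \mu\bigl(\phi^{-1}(f^{-1}(B))\bigr) = (\phi_{\#}\mu)(f^{-1}(B)) = \mu(f^{-1}(B)) = \nu(B),
\end{align*}
where the penultimate equality uses that $\phi$ is measure-preserving and the last is the definition of $\nu$.

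There is no real obstacle here: this statement is the abstract engine behind the earlier lemmas, and taking $f = \exp_p$ or $f = F_p$ recovers them verbatim. The only point requiring care is tracking that $\psi^{-1} = f \circ \phi^{-1} \circ f^{-1}$ and that the flanking $f$ and $f^{-1}$ annihilate, leaving the measure-preserving property of $\phi$ to do the work. I would also remark that nothing in the argument uses the Riemannian or even smooth structure beyond the measurability of $f$, $\phi$ and their inverses, so the lemma in fact holds for arbitrary measure spaces.
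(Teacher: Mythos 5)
Your proof is correct and follows essentially the same route as the paper: the paper proves this lemma by citing its preceding lemma verbatim, whose proof is exactly your second, set-theoretic computation (cancel $f^{-1}\circ f$ inside the preimage chain, then invoke $\phi_{\#}\mu=\mu$). Your first argument via functoriality of push-forward is just a cleaner repackaging of that same cancellation, and your closing remark that only measurability is needed is accurate, though the paper does not make it.
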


\begin{proof}
Same as in the preceding lemma.
\end{proof}

The preceding proposition shows that any measure-preserving diffeomorphism on ($M,\mu)$ induces a measure-preserving diffeomorphism on ($N,f_{\#} \nu)$. Note that if $M,N$ are Riemannian manifolds and $\phi$ is also an isometry, $\psi$ may not be an isometry (example here?).

\begin{remark}
Construction of hyperbolic wrapped distribution. We construct a hyperbolic wrapped distribution $\mathcal{G}(\mu, \Sigma)$ on $n$-dimensional hyperbolic space $\HH^n$ with $\mu \in \HH^n$ and $\Sigma$ positive definite.

Nagano et al's algorithm
\begin{enumerate}
\item Sample a vector $u$ from Gaussian distribution $\colon N(0,\Sigma)$ on $\R^n$.
\item Identify $T_{\mu_0}  \cong \R^n \subset \R^{n+1}$, with $\mu_0=(1,0,\dots,0) \in \HH^n$ (where we use the upper hyperboloid model for $\HH^n$).
\item We now think of $u$ as a tangent vector in $T_{\mu_0} \HH^n$.
\item Since $\HH^n$ is simply-connected and has negative sectional curvature, there exists a unique shortest geodesic $\gamma$ joining $\mu_0$ and $\mu$.
\item Parallel transport $u$ to $\mu$ along $\gamma$ to obtain a vector $u=P_{\mu_0 \mu}(u) \in T_{\mu} \HH^n$.
\item Map $u$ to $z=\exp_{\mu}(u)$.
\end{enumerate}

In short, the construction in Nagano's et al paper is given by the following diffeomorphism:
\begin{align*}
W_{\mu}	\vdots T_{\mu_0}  \cong \R^n & \rightarrow \HH^n\\
u & \rightarrow \exp_{\mu}(P_{\mu_0 \mu}( u ))
\end{align*}
where we use the canonical identification $\R^n \cong T_{\mu_0} \subset \R^{n+1}$ induced by considering the upper hyperboloid model. We have $W_{\mu}=\exp_{\mu} \circ P_{\mu_0 \mu}$. Note that since then a unique shortest geodesic joining $\mu_0$ and $\mu$ the map $W_{\mu}$ is well-defined (The map $W_{\mu}$ is the map $proj_{\mu}$ in Nagano's paper). Thus Nagano's construction gives a canonical way of choosing a hyperbolic wrapped distribution.
\end{remark}

\begin{lemma}
Suppose ($M, vol_{M}$), ($N, vol_{N})$. Let $f 	\vdots M \rightarrow M$, $\mu$ such that $f_{\#\mu}=\mu$. Let $\lambda 	\vdots M \rightarrow N$ with $\lambda_{\#}vol_{M}=vol_{N}$. Consider $WN_{\mu}=\lambda_{\#}\mu$. Let $\phi=\lambda \circ f \circ \lambda^{-1}$. This is a diffeomorphism in $N$. Claim $\phi 	\vdots N \rightarrow N$ preserves $WN_{\mu}$, i.e. $\phi_{\#}(WN_{\mu})= WN_{\mu}$ Let $ B \subset N$ be measurable.
\end{lemma}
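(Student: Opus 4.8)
The plan is to observe that this statement is a notational instance of the preceding ``more general'' lemma, with $\lambda$ playing the role of the diffeomorphism and $f$ the role of the measure-preserving map. Consequently the volume-preserving hypothesis $\lambda_{\#}\vol_M = \vol_N$ is not actually needed for the invariance claim: all that matters is that $\lambda$ is a diffeomorphism (so that $\phi = \lambda \circ f \circ \lambda^{-1}$ is a well-defined diffeomorphism of $N$ and preimages of measurable sets are measurable) and that $f$ preserves $\mu$. I would therefore prove the claim by a direct push-forward computation on an arbitrary measurable set $B \subset N$.

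First I would record the key intertwining identity. From $\phi = \lambda \circ f \circ \lambda^{-1}$ one gets $\phi \circ \lambda = \lambda \circ f$, and hence, taking inverses, $(\phi \circ \lambda)^{-1} = f^{-1} \circ \lambda^{-1}$, equivalently $\lambda^{-1} \circ \phi^{-1} = f^{-1} \circ \lambda^{-1}$. With this in hand the computation is a matter of unwinding the definition of push-forward (recall $(g_{\#}\rho)(B) = \rho(g^{-1}(B))$) together with $WN_\mu = \lambda_{\#}\mu$:
\begin{align*}
\phi_{\#}(WN_\mu)(B) &= WN_\mu\left(\phi^{-1}(B)\right) \\
&= (\lambda_{\#}\mu)\left(\phi^{-1}(B)\right) \\
&= \mu\left(\lambda^{-1}\left(\phi^{-1}(B)\right)\right) \\
&= \mu\left(f^{-1}\left(\lambda^{-1}(B)\right)\right) \\
&= (f_{\#}\mu)\left(\lambda^{-1}(B)\right) \\
&= \mu\left(\lambda^{-1}(B)\right) \\
&= (\lambda_{\#}\mu)(B) = WN_\mu(B),
\end{align*}
where the fourth equality uses the intertwining identity and the sixth uses the hypothesis $f_{\#}\mu = \mu$. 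Since $B$ was an arbitrary measurable subset of $N$, this yields $\phi_{\#}(WN_\mu) = WN_\mu$, as claimed.

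There is essentially no analytic obstacle here; the whole content is the algebraic relation $\phi \circ \lambda = \lambda \circ f$ combined with the definition of push-forward. The only points that deserve a word of care are bookkeeping ones: one must note that $\phi$ is genuinely a diffeomorphism of $N$ (so that $\phi^{-1}(B)$ is measurable), which follows from $\lambda$ and $f$ both being diffeomorphisms, and one should flag that the assumption $\lambda_{\#}\vol_M = \vol_N$ is used only to interpret $WN_\mu$ as a \emph{wrapped} distribution with the intended density, not in establishing the invariance itself. I would therefore remark on this redundancy rather than invoke the hypothesis in the proof.
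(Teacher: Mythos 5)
Your proof is correct and is essentially the same argument as the paper's: both unwind the definition of push-forward on an arbitrary measurable $B \subset N$, use $(\lambda \circ f \circ \lambda^{-1})^{-1} = \lambda \circ f^{-1} \circ \lambda^{-1}$ to cancel $\lambda^{-1}\circ\lambda$, and then invoke $f_{\#}\mu = \mu$. Your side observations --- that the statement is an instance of the preceding lemma with $\lambda$ as the conjugating diffeomorphism, and that the hypothesis $\lambda_{\#}\vol_M = \vol_N$ is never used in the invariance computation --- are also accurate.
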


\begin{proof}
\begin{align*}
\phi_{\#} WN_{\mu}(B)= WN_{\mu}(\phi^{-1}(B))=WN_{\mu}((\lambda \circ f \circ \lambda^{-1})^{-1} (B))=\\
WN_{\mu}(\lambda \circ f^{-1} \circ \lambda^{-1})(B)= \lambda_{\#\mu}((\lambda \circ f \circ \lambda^{-1})^{-1} (B))=\\
\mu(\lambda^{-1} (\lambda \circ f^{-1} \circ \lambda^{-1})(B))=\mu(f^{-1} \circ \lambda^{-1}(B))=\\
f_{\#}(\mu(\lambda^{-1}(B)))=\mu(\lambda^{-1}(B))=\lambda_{\#}\mu(B)=WN_{\mu}(B).
\end{align*}
\end{proof}

 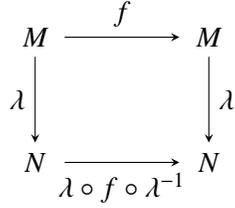
\begin{figure}[H]
  \centering
\begin{tikzpicture}
  \matrix (m) [matrix of math nodes,row sep=3em,column sep=4em,minimum width=2em]
  {
    M &  M \\
    N & {N} \\};
  \path[-stealth]
    (m-1-1) edge node [left] {$\lambda$} (m-2-1)
            edge node [above] {$f$} (m-1-2)
            (m-2-1.east|-m-2-2) edge node [below] {$\lambda \circ f \circ \lambda^{-1}$}
    node [above] {} (m-2-2)
    (m-1-2) edge node [right] {$\lambda$} (m-2-2);
  \end{tikzpicture}
  \caption{Diagram of commutative relationships. Consider $WN_{\mu}=\lambda_{\#}\mu$. Let $\phi=\lambda \circ f \circ \lambda^{-1}$. This is a diffeomorphism in $N$}. \label{fig:comm}
\end{figure}

\begin{lemma}
An area-preserving mapping $f$ if a mapping $f$ such that the measure $m(f^{-1}(A))=m(A)$, where $m(\cdot)$ denotes the measure of a measurable set $A$.
\end{lemma}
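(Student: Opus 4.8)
The plan is to read ``area-preserving'' in the differential-geometric sense used throughout the paper for the Lambert maps, namely that $f$ preserves the volume form, or equivalently that its Jacobian determinant has absolute value one in every local coordinate chart. Under this reading the statement is not a mere definition but a genuine claim: it asserts that a diffeomorphism preserving the infinitesimal area element also preserves the global measure $m = \vol$, in the sense that $m(f^{-1}(A)) = m(A)$ for every measurable $A$. Equivalently, it reconciles the geometric notion with the measure-theoretic condition $f_{\#} m = m$ already introduced in Section~\ref{sec:background}.

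First I would reduce to the case where $A$ is contained in the domain of a single coordinate chart. Since the Riemannian volume is a Radon measure and the chart domains form an open cover of $M$, any measurable set can be written, up to $m$-null sets, as a countable disjoint union of pieces each supported in a chart, using a partition of unity subordinate to the cover together with the $\sigma$-additivity of $m$. It therefore suffices to verify the identity on such pieces, after which countable additivity propagates it first to the Borel $\sigma$-algebra and then to the $m$-completion.

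The core step is then a direct application of the change-of-variables formula, combined with the integration formula for the Riemannian volume recalled in Section~\ref{sec:background}. Writing $\mathbf{1}_A$ for the indicator of $A$, using that $f$ is a diffeomorphism with $|\det Df| \equiv 1$, and noting $\mathbf{1}_A \circ f = \mathbf{1}_{f^{-1}(A)}$, I would compute
\begin{align*}
m(A) = \int_M \mathbf{1}_A \, \dvol = \int_M (\mathbf{1}_A \circ f)\,|\det Df|\, \dvol = \int_M \mathbf{1}_{f^{-1}(A)}\, \dvol = m\big(f^{-1}(A)\big),
\end{align*}
where the second equality is the change of variables for the push-forward and the third uses $|\det Df| = 1$. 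This is precisely the asserted equality.

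The main obstacle is bookkeeping rather than depth. One must make the single-chart reduction rigorous, handling chart overlaps and null sets, and must confirm that ``$|\det Df| = 1$'' is the correct local expression of area-preservation once the volume element carries the factor $\sqrt{\det(g_{ij})}$ appearing in the integration formula of Section~\ref{sec:background}; the Jacobian condition must be read against these two metric factors on domain and codomain. I note finally that if one instead adopts the appendix's phrasing and takes $m(f^{-1}(A)) = m(A)$ itself as the \emph{definition} of area-preserving, the statement is tautological and requires no argument; the substance of the lemma lies exactly in bridging that definition with the Jacobian-based notion used for the explicit Lambert maps of Section~\ref{sec:wrapped_prob_distns}.
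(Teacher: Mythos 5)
Your proposal is correct and, at its core, rests on the same identity as the paper's own proof: the change-of-variables formula applied to an indicator (or constant) function, which yields $m(f(A)) = \int_A |\det Df|\,\dvol$. The difference is the direction in which this identity is read. The paper's appendix proof assumes the measure condition $m(f(A)) = m(A)$ for all $A$ (together with ``essential injectivity'') and concludes that $|\det Df| \equiv 1$, i.e.\ it derives the Jacobian characterization from measure preservation; you instead take $|\det Df| \equiv 1$ as the definition of area-preserving and derive the measure identity $m(f^{-1}(A)) = m(A)$, which is the direction actually invoked elsewhere in the paper (for instance, to justify the density formula \eqref{eq:lambert_isom_pdf} for the Lambert-wrapped Gaussian). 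Since the lemma as stated is really a malformed definition asserting an equivalence between two notions of area preservation, both readings are legitimate, and the two arguments together supply the full equivalence. Your treatment is also more careful on two points the paper glosses over: the reduction to a single chart via partition of unity and countable additivity (the paper computes as if on a single Euclidean domain), and the observation that the Jacobian must be computed relative to the Riemannian volume elements, i.e.\ against the $\sqrt{\det(g_{ij})}$ factors on domain and codomain. Your closing remark---that under the paper's literal phrasing the statement is a tautology---is accurate; the paper's proof only acquires content because it silently switches to the Jacobian notion, exactly as you do explicitly.
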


\begin{proof}
When $f$ is essentially injective then:
\begin{align*}
    \mu\bigl(f(A)\bigr)=\mu(A)\qquad\forall A\subset X\ .
\end{align*}

Now it is proven in calculus that when $f$ is essentially injective and $f(A)=B$ then for any reasonable function $g 	\vdots B\to{\mathbb R}$ one has:

\begin{align*}
    \int_B g(x)\ {\rm d}(x)=\int_Ag\bigl(f(u)\bigr)\>|J_f(u)|\>{\rm d}(u)\ .
\end{align*}

Putting $g(x) 	\vdots \equiv 1$ here gives:

\begin{align*}
    \mu\bigl(f(A)\bigr)=\mu(B)=\int_B 1\ {\rm d}(x)=\int_A |J_f(u)|\>{\rm d}(u)\ .
\end{align*}

Here the right hand side can only be $=\mu(A)$ for every $A\subset X$ if $|J_f(u)|\equiv1$.

\end{proof}

\begin{proposition}
For the mixture distribution $p( \theta ) = \sum_{i=1}^q \pi_i WN( p_i, \Sigma_i )\
$ with $\pi_i > 0$ and $\sum_{i=1}^q \pi_i = 1$, there exists $q \in \mathbb{N}$, \
and $\theta$ such that, for an arbitrary continuous distribution $f$ and $\epsilon\
 > 0$, we have $\| f - p(\theta) \|_p < \epsilon$, where $\| \cdot \|_p$ denotes t\
he $l_p$ norm. 
\end{proposition}
\begin{proof}

The same reasoning as the mixture of Gaussians.

\end{proof}

\begin{proposition}
Let $Y \sim WN_{M} (p, \Sigma)$ and denote observations as $\underline{y} = (y_1, y_2, \dots, y_m)$, where $y_i \in M$ and $M \in \{ \mathbb{H}, \mathbb{S} \}$. The maximum likelihood estimators for $p$ and $\Sigma$ are given by  
\begin{align*}                              
  \hat{\Sigma} &= \dfrac{1}{m} \sum_{i=1}^m \left( h_p^{-1}(y_i) \right) \left( h_p^{-1}(y_i) \right)^T \\          
  \hat{p} &= ?                 
\end{align*}                                         
where $h_p^{-1} = \left( \phi_{p_0 \rightarrow p} \circ lam_{p_0} \right)^{-1}_{[-(k+1)]}$. 
\end{proposition}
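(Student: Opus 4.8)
The plan is to exploit the defining feature of the volume-preserving construction: because the wrapping map $h_p = \phi_{p_0\to p}\circ\mathrm{lam}_{p_0}$ is a composition of the area-preserving Lambert map and an isometry, both of which have Jacobian determinant equal to $1$, the change-of-variables formula of Section~\ref{sec:generic_wrapped_distn} produces \emph{no} Jacobian correction term. Thus the density \eqref{eq:lambert_isom_pdf} is literally the Euclidean Gaussian density evaluated at the tangent coordinate $u=h_p^{-1}(z)$, in contrast with the exponential-map construction \eqref{eq:nagano_pdf}, where the factor $(\|u\|/\sinh\|u\|)^{k-1}$ appears. This single observation reduces the entire estimation problem to ordinary multivariate Gaussian maximum likelihood in the transformed variables $u_i = h_p^{-1}(y_i)$.

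First I would write down the log-likelihood. Setting $u_i = h_p^{-1}(y_i)\in\R^k$ and using that the underlying tangent-plane Gaussian is centred at the origin, the log-likelihood is, up to an additive constant,
\begin{align*}
\ell(p,\Sigma) = -\frac{m}{2}\log|\Sigma| - \frac{1}{2}\sum_{i=1}^m u_i^{T}\Sigma^{-1}u_i,
\end{align*}
where each $u_i$ depends on $p$ through the isometry $\phi_{p_0\to p}$ inside $h_p^{-1}$. (I would also flag that the $|\Sigma|^{-1/2}$ in the denominator of \eqref{eq:lambert_isom_pdf} should read $|\Sigma|^{1/2}$ for a genuine normalised Gaussian, since otherwise the coefficient of $\log|\Sigma|$ — and hence the estimator — is altered; I would carry the corrected normalisation through.)

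Next I would derive $\hat\Sigma$ for fixed $p$. This is the classical problem: treating the $u_i$ as fixed, differentiate $\ell$ with respect to $\Sigma^{-1}$ using the standard matrix identities $\partial\log|\Sigma|/\partial\Sigma^{-1} = -\Sigma$ and $\partial(u^{T}\Sigma^{-1}u)/\partial\Sigma^{-1} = uu^{T}$, set the gradient to zero, and solve. This yields the stated estimator
\begin{align*}
\hat\Sigma = \frac{1}{m}\sum_{i=1}^m \bigl(h_p^{-1}(y_i)\bigr)\bigl(h_p^{-1}(y_i)\bigr)^{T},
\end{align*}
and a second-order check (the log-likelihood is concave in the precision $\Sigma^{-1}$) confirms that this is the global maximiser.

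The hard part is the estimator for the location $p$. Here $p$ enters only through $u_i = h_p^{-1}(y_i) = \mathrm{lam}_{p_0}^{-1}\bigl(\phi_{p\to p_0}(y_i)\bigr)$, and this dependence is genuinely nonlinear: the isometry $\phi_{p_0\to p}$ of \eqref{eq:isom_hyperboloid} is not an additive shift, so the familiar Euclidean conclusion $\hat p = \bar u$ does not transfer. Profiling $\Sigma$ out, the location MLE is the minimiser of the Mahalanobis-weighted sum $\sum_i u_i(p)^{T}\Sigma^{-1}u_i(p)$ over $p\in M$, i.e.\ a Fréchet-type mean computed in Lambert tangent coordinates rather than in the Riemannian distance. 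I expect no closed form: the best one can write is the first-order optimality condition $\sum_i \bigl(\partial u_i/\partial p\bigr)^{T}\Sigma^{-1}u_i = 0$, where $\partial u_i/\partial p$ requires differentiating the inverse isometry and the inverse Lambert map with respect to the location. This is precisely the gap signalled by the ``$\hat p = ?$'' in the statement, and I would either solve it numerically (for instance by gradient ascent on the sphere or hyperboloid) or leave it, as the authors do, as an implicit equation.
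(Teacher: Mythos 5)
Your proposal is correct and follows essentially the same route as the paper: both exploit the unit Jacobian of the Lambert-plus-isometry wrapping map to write the log-likelihood as an ordinary Euclidean Gaussian log-likelihood in the coordinates $u_i = h_p^{-1}(y_i)$, both recover $\hat{\Sigma} = \frac{1}{m}\sum_{i=1}^m \left(h_p^{-1}(y_i)\right)\left(h_p^{-1}(y_i)\right)^T$ by the standard multivariate-Gaussian argument, and both stop at an unsolved first-order condition for $p$ (the paper likewise only writes $\partial L/\partial p$ via the chain rule and leaves the ``$\hat{p}=?$'' gap). Your additional observations---the $|\Sigma|^{-1/2}$ normalisation typo in \eqref{eq:lambert_isom_pdf}, the concavity check in $\Sigma^{-1}$, and the interpretation of $\hat{p}$ as a Fréchet-type mean in Lambert coordinates---are refinements consistent with, and slightly more careful than, the paper's treatment.
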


\begin{proof}
The log-likelihood is given by 

\begin{align*}   
L &= \log p( \underline{y}) = \log \prod_{i=1}^m p( y_i | p, \Sigma) = \sum_{i=1}^m \log p(y_i | p, \Sigma) \\ 
    &= \sum_{i=1}^m \left[ - \log \mbox{det}(2 \pi
     \Sigma)^{1/2} - \dfrac{1}{2} \left( h_p^{-1}(y_i) \right)^T \Sigma^{-1} \left( h_p^{-1}(y_i) \right) \right] \\  
  &=  - \dfrac{m}{2} \log \mbox{det}(2 \pi \Sigma) - \dfrac{1}{2} \sum_{i=1}^m \left( h_p^{-1}(y_i) \right)^T \Sigma^{-1} \left( h_p^{-1}(y_i) \right),              
\end{align*} 

where $h_p^{-1} = \left( \phi_{p_0 \rightarrow p} \circ lam_{p_0} \right)^{-1}_{[-\
(k+1)]}$. \\  
First, we determine $\hat{\Sigma}$ by finding the value which satisfies $\dfrac{\
partial L}{\partial \Sigma} = 0$ and  $\dfrac{\partial^2 L}{\partial^2 \Sigma} < 0$\
. This calculation is the same as the standard multivariate Gaussian case, and so \
we obtain  
\begin{align*} 
  \hat{\Sigma} = \dfrac{1}{m} \sum_{i=1}^m \left( h_p^{-1}(y_i) \right) \left( h_p\
^{-1}(y_i) \right)^T  
\end{align*}  
Now, we determine $\hat{p}$ in the same way. We first obtain                       
\begin{align*}  
  \dfrac{\partial L}{\partial p} &=  - \dfrac{1}{2} \dfrac{\partial}{\partial p} \sum_{i=1}^m \left( h_p^{-1}(y_i) \right)^T \Sigma^{-1} \left( h_p^{-1}(y_i) \right) \\      
                   &=  - \sum_{i=1}^m \Sigma \left( h_p^{-1}(y_i) \right) \dfrac{\partial}{\partial p} h_p^{-1}(y_i) \hspace{1cm} \mbox{ by chain rule and symmetry of } \Sigma  
\end{align*}
\end{proof}

\begin{proposition}
The inverse-Wishart is a conjugate prior for $\Sigma$ when $Y \sim WN(p, \Sigma)$.\
 If $\Sigma \sim IW(\nu, \Phi)$ and we have observations $ \underline{y} = (y_1, y\
_2, \dots, y_m)$, then $\Sigma | Y \sim IW \left(\nu + m, \Phi + \sum_{i=1}^m \left(h_p^{-1}(y_i) \right) \left(h_p^{-1}(y_i) \right)^T \right)$, where $h_p^{-1} = \left( \phi_{p_0 \rightarrow p} \circ lam_{p_0} \right)^{-1}_{[-\
(k+1)]}$ 
\end{proposition}

\begin{proof}

Follows from result for standard multivariate Gaussian with observations $X \sim N (\mu, \Sigma)$ and substituting $h_p^{-1}(y_i) $ for $(x_i - \mu)$. \\
\end{proof}

\begin{proposition}
 Let a bivariate normal distribution in a tangent plane. After the area preserving mapping the resulting distribution will be symmetric.
\end{proposition}

\begin{proof}
Without loss of generality, suppose $R=[a,b]$. This implies that $-R=[-b,-a]$. Denote $f$ to be the probability density function of the standard normal. Then
\begin{align*}
P(N \in -R) = \int_{-R}f(x)\text{ d}x = \int_{-b}^{-a}f(x)\text{ d}x\text{.}
\end{align*}
With the change of variable $y=-x$ and $dy=-dx$:
\begin{align*}
P(N \in -R) = \int_{b}^{a}f(-y)(-\text{d}y) = \int_{a}^{b}f(-y)\text{ d}y = \int_{a}^{b}f(y)\text{ d}y = P(N \in R)
\end{align*}
since $f(y) \equiv f(-y)$.
\end{proof}

\begin{proposition}
Let a partially monotonic distribution in a tangent plane. After the area preserving mapping the resulting distribution would be partially monotonic.
\end{proposition}

\begin{proof}
All paths of distributions are strictly increasing until the mean and the strictly decreasing from the mean to the other points.
\end{proof}

\begin{proposition}
Let a unimodal distribution in a tangent plane. After the area preserving mapping the resulting distribution would be unimodal if the curvature of the surface is constant.
\end{proposition}

\begin{proof}
Normal distributions preserves their unimodality as the bivariate normal distributions.
\end{proof}

\begin{proposition}
The mapping $ISO_{\mu_0 \rightarrow \mu} \circ F^K_{\mu_0}(x)$ depends on the hyperbolic geometry and converges to $x + \mu_0$ as $K \rightarrow 0$. For all $x$ in the hyperboloid $\HH^n_K$ and $x \in \mathcal{T}_{\mu_0} M$ , it holds that
\begin{align*}
lim_{K\rightarrow 0} F^K_{\mu_0}(x)=F_{\mu_0}(x)=x+\mu_0
\end{align*}
, hence the area preserving map converges to its Euclidean variant.\\
\end{proposition}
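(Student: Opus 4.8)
The plan is to isolate the area-preserving factor, reduce the claim to a pointwise limit of $F^K_{\mu_0}$, and then obtain that limit from a small-argument expansion of the hyperbolic functions appearing in \eqref{eq:lambert_hyperboloid}. First I would note that the isometry $ISO_{\mu_0\to\mu}$ does not depend on the curvature parameter $K$ being sent to $0$ and is continuous on the ambient space $\R^{n+1}$; consequently, once I establish $F^K_{\mu_0}(x)\to x+\mu_0$, applying $ISO_{\mu_0\to\mu}$ to both sides and invoking continuity yields convergence of the full map $ISO_{\mu_0\to\mu}\circ F^K_{\mu_0}$ to the Euclidean translate based at $\mu$. Hence it suffices to treat $F^K_{\mu_0}$ alone, and I read $K\to0$ as the flat limit in which the absolute sectional curvature of $\HH^2_K$ tends to zero, equivalently the radius $S=1/\sqrt{K}$ tends to infinity.

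Next, for $x=u=(u_1,u_2)\in\mathcal{T}_{\mu_0}M\cong\R^2$ I would write $r=\sqrt{u_1^2+u_2^2}$ and read off the three components of $F^K_{\mu_0}(u)$ from \eqref{eq:lambert_hyperboloid}, where the scale $S=1/\sqrt{K}$ governs the radius of the hyperboloid. As $K\to0$ the inner argument $r/2S\to0$ for each fixed $u$, so the nested quantity $w_K:=2\sinh(r/2S)$ also tends to $0$. I would then substitute the expansions $\sinh w=w+O(w^3)$ and $\cosh w=1+O(w^2)$ valid as $w\to0$. The key point is that the radial prefactor $S\sinh(w_K)$ multiplying $u_i/r$ equals $r$ to leading order, so the factor $r$ cancels and the first two coordinates converge to $u_1$ and $u_2$; simultaneously the third coordinate agrees with the ambient height of $\mu_0$ up to an error of order $r^2/S^2\to0$, whence $F^K_{\mu_0}(u)-\mu_0\to(u_1,u_2,0)$. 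Identifying $(u_1,u_2,0)$ with $u$ under the canonical isomorphism $\mathcal{T}_{\mu_0}M\cong\R^2$ then gives $\lim_{K\to0}F^K_{\mu_0}(u)=u+\mu_0$, as claimed.

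The step I expect to require the most care is bookkeeping the several $K$-dependent quantities consistently: the radius scale $S=1/\sqrt{K}$, the nested hyperbolic functions, and above all the normalisation of the base point $\mu_0$, which must be held fixed as the curvature varies so that the limiting map is the genuine flat translation rather than a degenerate one. Concretely, one must verify that the leading-order cancellation of $r$ survives the composition $\sinh\bigl(2\sinh(\cdot)\bigr)$ and is not spoiled by the prefactor, and confirm explicitly that $K\to0$ is indeed the regime $r/2S\to0$ in which the expansions are legitimate. Finally, I would record that all the error terms above are $O(r^2/S^2)$ and hence uniform for $u$ in any compact set, so the convergence is in fact locally uniform; this stronger statement is what one needs in order to conclude that the wrapped distributions $(F^K_{\mu_0})_{\#}\mu$ converge to their Euclidean counterpart as the curvature vanishes.
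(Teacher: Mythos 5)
Your proof is correct, but it takes a genuinely different route from the paper's. The paper's own proof is a one-line deferral: it asserts that the argument is ``identical with'' \cite{skopek2019mixed}, with the wrapping map $\exp_{\mu}\circ \mathrm{PT}_{\mu_0\to\mu}$ used there replaced by the composition of the exponential map at $\mu_0$ with an isometry; notably, it never manipulates the Lambert map at all, even though the proposition's conclusion is about the area-preserving map. Your argument instead works directly with the explicit formula \eqref{eq:lambert_hyperboloid}: you peel off the isometry, expand $\sinh$ and $\cosh$ for small arguments, and check that the radial prefactor $S\sinh\bigl(2\sinh(r/(2S))\bigr)$ reduces to $r$ at leading order, giving $F^K_{\mu_0}(u)-\mu_0\to(u_1,u_2,0)$. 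This buys three things the paper's proof does not supply: it is self-contained, it actually addresses the map named in the statement, and it records locally uniform convergence with explicit error terms, which is exactly what is needed to deduce convergence of the wrapped measures $(F^K_{\mu_0})_{\#}\mu$ to their Euclidean counterpart. You also silently repair a notational inconsistency in the paper by taking the prefactor to be the radius $S=1/\sqrt{K}$ rather than the $K$ printed in \eqref{eq:lambert_hyperboloid}; that is the only reading under which the map lands on the hyperboloid and the flat limit is non-degenerate, so this correction strengthens rather than weakens your argument.

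Two small caveats. First, the isometry $\varphi_{p_0\to p}=[\,e_1\mid e_2\mid\dots\mid e_d\mid p\,]$ does depend on $K$ through the target point $p$, which lies on the $K$-dependent hyperboloid; your reduction step is therefore only valid for the displayed claim about $F^K_{\mu_0}$ alone (which is all the displayed equation asserts), while for the composed map one must specify how $p$ varies with $K$ --- this is what the paper's citation of \cite{skopek2019mixed} implicitly handles. Second, the error in the third coordinate is $O(r^2/S)$ rather than $O(r^2/S^2)$; it still vanishes as $S\to\infty$, so the conclusion and the local uniformity are unaffected.
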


\begin{proof}
Proof is identical with \cite{skopek2019mixed}, but instead of $f=exp_{\mu}x \circ PT_{\mu_0 \rightarrow \mu}$ we use $f=ISO_{\mu \rightarrow \mu_0} \circ exp_{\mu_0}x $ for $K<0$. The same with $K>0$.
\end{proof}

\begin{proposition}
The mapping $ISO_{\mu_0 \rightarrow \mu} \circ F^K_{\mu_0}(x)$ depends on the spherical geometry and converges to $x + \mu_0$ as $K \rightarrow 0$. For all $x$ in the Sphere $\mathbb{S}^n_K$ and $x \in \mathcal{T}_{\mu_0}M$ , it holds that:
\begin{align*}
\lim_{K\rightarrow 0} F^K_{\mu_0}(x)=F_{\mu_0}(x)=x+\mu_0
\end{align*}
hence the area preserving map converges to its Euclidean variant.\\
\end{proposition}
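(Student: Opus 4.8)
The plan is to mirror the argument used for the hyperbolic case in the preceding proposition, reducing the claim to an explicit asymptotic analysis of the Lambert map as the curvature tends to zero. First I would make the curvature dependence unambiguous by writing $\mathbb{S}^n_K$ as the round sphere of radius $R = 1/\sqrt{K}$ in $\R^{n+1}$, so that $K\to 0$ is equivalent to $R\to\infty$; this reconciles the curvature parameter in the statement with the radius-like parameter appearing in the explicit Lambert formula. With base point $\mu_0=(0,\dots,0,-R)$, I would rewrite $F^K_{\mu_0}\colon T_{\mu_0}\mathbb{S}^n_K\to\mathbb{S}^n_K$ in these coordinates: writing a tangent vector as $x=(x_1,\dots,x_n,0)$ with planar radius $\rho=\sqrt{x_1^2+\cdots+x_n^2}$, the equal-area condition gives the colatitude $\theta = 2\arcsin(\rho/2R)$ measured from $\mu_0$, so that the first $n$ coordinates of $F^K_{\mu_0}(x)$ are $R\sin\theta\cdot(x_i/\rho)$ and the last coordinate is $-R\cos\theta$.

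Next I would Taylor expand each coordinate as $R\to\infty$. Since $\theta = 2\arcsin(\rho/2R) = \rho/R + O(R^{-3})$, we get $R\sin\theta = \rho + O(R^{-2})$, whence the $i$-th coordinate tends to $(\rho)(x_i/\rho)=x_i$; and $-R\cos\theta = -R + \rho^2/(2R) + O(R^{-3})$, so that, after identifying the divergent constant $-R$ with the last component of $\mu_0$, the remaining contribution $\rho^2/(2R)$ vanishes in the limit. Collecting terms yields $\lim_{K\to 0} F^K_{\mu_0}(x) = (x_1,\dots,x_n,0) + \mu_0 = x+\mu_0 = F_{\mu_0}(x)$, the Euclidean translation. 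I would also note that this convergence is uniform for $x$ in any compact subset of $T_{\mu_0}\mathbb{S}^n_K$, since the error terms are controlled uniformly there.

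Finally, to obtain the claim for the full map $ISO_{\mu_0 \rightarrow \mu}\circ F^K_{\mu_0}$, I would argue exactly as in the hyperbolic proposition and in \cite{skopek2019mixed} that the isometry $ISO_{\mu_0\rightarrow\mu}$ of $\mathbb{S}^n_K$ converges, as $K\to 0$, to the corresponding Euclidean rigid motion carrying $\mu_0$ to $\mu$. Since both factors converge and composition is continuous, the composite converges to the Euclidean variant, establishing the proposition.

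The main obstacle I anticipate is the bookkeeping around the curvature normalization: the base point $\mu_0=(0,\dots,0,-R)$ itself depends on $K$, and the ambient last coordinate $-R\cos\theta$ diverges, so the statement $F^K_{\mu_0}(x)\to x+\mu_0$ is only meaningful once the divergent part $-R$ is consistently identified with $\mu_0$ on both sides. Making this identification precise — ideally by recentering at $\mu_0$ and taking the limit in tangent-space coordinates rather than in the ambient $\R^{n+1}$ — is the delicate point; the remaining expansions are routine.
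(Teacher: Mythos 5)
Your proposal is correct, but it is a genuinely different route from what the paper does. The paper's own proof is a single sentence: it asserts that the proof ``is identical with \cite{skopek2019mixed}'', except that the composition $\exp_{\mu}\circ\,\mathrm{PT}_{\mu_0\rightarrow\mu}$ is replaced by $\mathrm{ISO}_{\mu\rightarrow\mu_0}\circ\exp_{\mu_0}$ --- i.e.\ it defers entirely to an external Taylor-expansion argument for the \emph{exponential} map and claims the substitution of isometries for parallel transport changes nothing. You instead give a self-contained asymptotic analysis of the \emph{Lambert} map itself: writing $\mathbb{S}^n_K$ as the sphere of radius $R=1/\sqrt{K}$, deriving the equal-area colatitude $\theta=2\arcsin(\rho/2R)$, and expanding $R\sin\theta=\rho+O(R^{-2})$ and $-R\cos\theta=-R+\rho^2/(2R)+O(R^{-3})$ so that $F^K_{\mu_0}(x)-(x+\mu_0)\to 0$. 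This buys two things the paper's citation does not. First, it actually addresses the map named in the proposition: the statement concerns the area-preserving map $F^K_{\mu_0}$, whereas the cited argument concerns $\exp$, so the paper's proof has a mismatch that your computation closes. Second, you make explicit and resolve the recentering subtlety --- that $\mu_0=(0,\dots,0,-R)$ itself diverges as $K\to 0$, so the limit only makes sense for the difference $F^K_{\mu_0}(x)-(x+\mu_0)$ --- which the paper (and the statement itself) glosses over. The trade-off is length: the paper's approach is one line and reuses an existing result; yours is longer but verifiable on the page, and the final step (convergence of the isometry $\mathrm{ISO}_{\mu_0\rightarrow\mu}$ to a Euclidean rigid motion, plus continuity of composition) is kept at the same level of rigor as the paper's hyperbolic counterpart, which is reasonable here.
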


\begin{proof}
Proof is identical with \cite{skopek2019mixed}, but instead of $f=exp_{\mu}x \circ PT_{\mu_0 \rightarrow \mu}$ we use $f=ISO_{\mu \rightarrow \mu_0} \circ exp_{\mu_0}x $ for $K<0$. The same with $K>0$.
\end{proof}

\begin{remark}

In order to achieve bijectivity the mapping should be a diffeomorphism. For $K$ non-positive Cartan-Hadamard is used. In case $K$ is positive the distribution might be rescaled or even truncated in order to be wrapped only once around the manifold.
\end{remark}

\begin{proof}

\end{proof}

\begin{proposition}

Consider the two distributions
\begin{equation*}
f_{VM}(z \mid 0, \kappa) = \frac{e^{\kappa cos(z)}}{
2\pi I_0(\kappa)} 
\end{equation*}

\begin{equation*}
  f_{WN}\left( z | 0, \sigma \right) = \dfrac{1}{ (2 \pi)^{d/2} | \sigma |^{-1/2} } \exp \left(-\frac {1}{2} u^T \sigma^{-1} u \right),
\end{equation*}
,
with $0< z << \R$ and $\sigma^2 = \frac{1}{\kappa}$. Then
\begin{equation*}
\mid \mid f_{VM}(z \mid 0, \kappa) - f_{WN} (z | 0, \sigma^2) \mid \mid_{L^{\infty}([-\pi,+\pi))} \rightarrow 0, \text{ as } \kappa \rightarrow +\infty.
\end{equation*}

\end{proposition}

\begin{proof}

Following the same reasoning as in \cite{selvitella2019geometric}.

\end{proof}


\end{document}